\pgfplotsset{compat=newest}
\newtheorem{theorem}{Theorem}
\newtheorem{lemma}[theorem]{Lemma}
\newtheorem{corollary}[theorem]{Corollary}
\newtheorem{remark}[theorem]{Remark}
\newcommand\net{\mathcal{N}}
\newcommand{\R}{\mathbb{R}}
\newcommand{\N}{\mathbb{N}}
\newcommand{\osc}{\mathrm{osc}}
\newcommand{\loss}{\mathcal{L}}
\newcommand{\PiDivRT}[1]{J_\mesh^{\mathrm{div},#1}}
\newcommand{\PiDivRTtilde}[1]{J_{\widetilde\mesh}^{\mathrm{div},#1}}
\newcommand{\RT}[1]{\boldsymbol{\mathrm{R\!T}}^{#1}}
\newcommand{\Patch}{\Omega}
\newcommand{\diam}{\mathrm{diam}}
\DeclareMathOperator*{\argmin}{arg\, min}
\newcommand{\bbeta}{\boldsymbol{\beta}}
\newcommand{\bA}{\boldsymbol{A}}
\newcommand{\bR}{\boldsymbol{R}}
\newcommand{\bP}{\boldsymbol{P}}
\newcommand{\bx}{\boldsymbol{x}}
\newcommand{\br}{\boldsymbol{r}}
\newcommand{\btau}{\boldsymbol{\tau}}
\newcommand{\bsigma}{\boldsymbol{\sigma}}
\newcommand{\jump}[1]{\llbracket{#1}\rrbracket}
\newcommand{\set}[2]{\big\{#1\,:\,#2\big\}}
\newcommand{\ip}[2]{(#1\hspace*{.5mm},#2)}
\newcommand{\dual}[2]{\langle#1\hspace*{.5mm},#2\rangle}
\renewcommand{\div}{\operatorname{div}}
\newcommand{\Hdivset}[1]{\boldsymbol{H}(\div;#1)}
\newcommand{\normal}{{\boldsymbol{n}}}
\newcommand{\mesh}{\mathcal{T}}
\newcommand{\faces}{\mathcal{F}}
\newcommand{\tr}{\operatorname{tr}}
\newcommand{\trdiv}[1]{\tr_{#1}^\mathrm{div}}
\newcommand{\trgrad}[1]{\tr_{#1}^\nabla}
\newcommand{\ran}{\mathrm{ran}}
\begin{document}

\title{A posteriori analysis of neural network approximations}
\date{\today}

\author{Thomas F\"{u}hrer}
\address{Facultad de Matem\'{a}ticas, Pontificia Universidad Cat\'{o}lica de Chile, Santiago, Chile}
\email{thfuhrer@uc.cl}
\author{Sergio Rojas}
\address{School of Mathematics, Monash University, Melbourne, Australia}
\email{Sergio.Rojas@monash.edu}
\thanks{{\bf Acknowledgment.} 
This work was supported by ANID through FONDECYT projects 1250070 (TF) and 1240643 (SR), and by the National Center for Artificial Intelligence CENIA FB210017, Basal ANID. Parts of the presented research have been conducted while the first author was hosted by the School of Mathematics at Monash University, Melbourne, Australia.}

\keywords{Neural network, adaptivity, a posteriori error estimator, PINN, RVPINN}
\subjclass[2010]{65N15,65M60,65M75}

\begin{abstract}
  In a general setting, we study a posteriori estimates used in finite element analysis to measure the error between a solution and its approximation.
  The latter is not necessarily generated by a finite element method. 
  We show that the error is equivalent to the sum of two residuals provided that the underlying variational formulation is well posed. 
  The first contribution is the projection of the residual to a finite-dimensional space and is therefore computable, 
  while the second one can be reliably estimated by a computable upper bound in many practical scenarios. 
  Assuming sufficiently accurate quadrature, our findings can be used to estimate the error of, e.g., neural network outputs. 
  Two important applications can be considered during optimization: first, the estimators are used to monitor the error in each solver step, or, second, the two estimators are included in the loss functional, and therefore provide control over the error. 
  As a model problem, we consider a second-order elliptic partial differential equation and discuss different variational formulations thereof, including several options to include boundary conditions in the estimators. Various numerical experiments are presented to validate our findings.
\end{abstract}
\maketitle

\section{Introduction}
Neural network based methods such as Physics-Informed Neural Networks (PINNs)~\cite{raissi2019physics} have emerged as a powerful alternative to traditional numerical methods for approximating solutions to partial differential equations (PDEs), particularly in data-scarce scenarios. 
The core idea behind PINNs is to embed the underlying physical laws 
directly into the training process. This is achieved by augmenting the standard loss functional with penalty terms that measure deviations (residuals) from the governing equations, thereby framing the learning task as a residual minimization problem for the neural network. Specifically, the network aims to approximate a function that satisfies the PDE in a strong or weak sense, up to a given set of interpolation constraints.

Despite their appeal and widespread adoption, PINNs suffer from several limitations. On the one hand, the strong-form residual used in standard PINNs typically requires high regularity assumptions for the solution.
On the other hand, the collocation-based approximation of the residual norm, combined with improper weighting between different contributions (e.g., interior vs. boundary residuals), can lead to poor convergence, instability, or inaccurate predictions~\cite{krishnapriyan2021characterizing, wang2022and}.

To address these shortcomings, several alternative methodologies for defining loss functionals have recently been proposed, including 
Deep Least-Squares methods~\cite{cai2020deep}, Deep Ritz Method~\cite{yu2018deep}, Variational PINNs (VPINNs)~\cite{kharazmi2019variational, kharazmi2021hp}, Robust VPINNs (RVPINNs) \cite{RVPINN24}, Quasi-Optimal Least-Squares (QOLS)~\cite{MSS_QOLS24}, and Weak Adversial Networks (WAN)~\cite{WAN}.
  Unlike the classical PINN formulation~\cite{raissi2019physics}, these approaches are based on variational arguments rather than point-wise residual minimization. From a theoretical perspective, variational formulations not only relax the regularity requirements on the solution but also enable the incorporation of classical tools from the finite element community. This has led to notable improvements in complex scenarios where collocation-based strategies struggle. Nevertheless, challenges remain, particularly regarding the numerical integration of loss terms, which cannot be computed exactly in general, and the correct balancing of the loss terms.

Another notable issue is that the neural network defines a manifold rather than a finite-dimensional space. This implies that, contrary to finite element-type formulations, the imposition of boundary conditions remains a subtle issue. Direct (strong) enforcement of boundary conditions within the network architecture not only limits the class of admissible domains but may also degrade performance in complex scenarios. In contrast, weak imposition strategies have shown more promise in such settings (see, e.g.,~\cite{RVPINN24}). Nevertheless, as pointed out in~\cite{berrone2023enforcing}, even variational enforcement techniques such as Nitsche’s method can lead to suboptimal solvers. Thus, in the authors’ view, it remains an open task to enforce boundary conditions while maintaining a proper balance among the loss components. 

\noindent\textbf{Contributions.} 
Motivated by the above, we propose a general \emph{a posteriori} error estimation framework 
that is based on techniques from minimum residual methods (MINRES) and FEM.
Our framework decomposes, for a given discretization method, the upper bound of the total error of an approximation $u_\theta$, e.g., the output of a neural network, into two complementary and interpretable parts:
$$\textit{error} \lesssim \textit{MINRES error} + \textit{FEM error estimator},$$
where the \emph{MINRES error} (denoted $\eta(u_\theta)$), which also serves as a computable lower bound, captures the projection of the residual onto a discrete test space, while the \emph{FEM error estimator} (denoted $\rho(u_\theta)$) is also computable and derived using techniques from finite element analysis. 
We also demonstrate that such a decomposition can be utilized to decouple interior and boundary residual contributions.

Our methodology builds on recent developments in residual minimization techniques in discrete dual norms, which is the core of several finite element methods available in the literature, see the overview article~\cite{DPG_ActaNumerica} and references therein. 
We propose to incorporate boundary conditions weakly, relying on residual-minimization techniques for fractional norms~\cite{MSS24} and volume norms~\cite{MSS_QOLS24}.

Comparing our method with the existing literature, in \cite{cai2020deep}, the authors define the loss functional as an upper bound for a residual-based quantity, in the spirit of least-squares formulations. 
In~\cite{ERU25}, the authors provide a framework for a posteriori estimation of neural network outputs by restricting and extending residuals to simpler geometries. 
In~\cite{RVPINN24} (see also \cite{uriarte2025optimizing, udomworarat2025neural}), the \emph{MINRES error} $\eta(u_\theta)$ is used directly as the loss functional, aligning with VPINNs~\cite{kharazmi2019variational} when the test space basis is orthonormal. 
The work in~\cite{taylor2023deep} constructs such an orthonormal basis, while~\cite{badia2024finite} projects the neural network onto a finite element space, enabling strong boundary enforcement and exact residual integration for the residual in terms of the projected neural network. However, using $\eta(u_\theta)$ alone may yield misleading results. As shown in~\cite{RVPINN24}, this term can converge to zero during optimization, although the true error stagnates, indicating that the complementary component becomes dominant. Therefore, neglecting the second term $\rho(u_\theta)$ in the error estimate can reduce both robustness and accuracy. 
In the recent work~\cite{MSS_QOLS24} quasi-optimal least-squares methods are introduced that are based on the use of neural networks for trial and test spaces.

In our framework, both contributions are explicitly considered and can be integrated into training objectives: either (i) as diagnostic tools to monitor solver accuracy, or (ii) as components of the loss functional for guaranteed error control. We illustrate the approach for different variational formulations of second-order elliptic PDEs, highlighting strategies for boundary condition imposition and providing numerical evidence of its effectiveness. 
We also point out and discuss differences between our approach and classical PINNs.

\noindent\textbf{Structure of the paper.}
The remainder of this article is structured as follows. In Section~\ref{sec:abstract}, we present a general \emph{a posteriori} error estimation approach and show how to efficiently compute a computable lower bound. Section~\ref{sec:notation} introduces the notation and auxiliary results required for the subsequent analysis. In Section~\ref{sec:modelpoisson}, we formulate a diffusion–advection–reaction model problem and discuss two distinct variational formulations, including three different strategies for imposing Dirichlet-type boundary conditions in a weak sense. Section~\ref{sec:numerics} presents a series of numerical experiments that validate the effectiveness of the proposed methodology and demonstrate its practical advantages in different scenarios. Finally, conclusions and potential directions for future research are outlined in Section~\ref{sec:conclusions}.

\section{A general error estimate}\label{sec:abstract}
Let $U, V$ denote Hilbert spaces, and $B\colon U \to V'$ be a linear and bounded operator.
We say that $B$ is bounded below if there exists $c_B>0$, such that
\begin{align}\label{eq:Biso}
  c_B \|u\|_U \leq \|Bu\|_{V'}  \quad\forall u\in U.
\end{align}
Note that if $B$ is bounded below, then $B$ has closed range and is injective.

Let $V_h\subseteq V$ denote a finite-dimensional subspace equipped with a norm $\|\cdot\|_{V_h}$, induced by the inner product $\ip{\cdot}\cdot_{V_h}$, uniformly equivalent to $\|\cdot\|_V$. This is, there exists $C_\mathrm{equiv}>0$, independent of $V_h$, such that
\begin{align}\label{eq:normequivV}
  C_\mathrm{equiv}^{-1} \|v\|_V \leq \|v\|_{V_h} \leq C_\mathrm{equiv}\|v\|_V \quad\forall v\in V_h.
\end{align}
Clearly, $C_\mathrm{equiv}=1$ if $\|\cdot\|_{V_h}=\|\cdot\|_V$.
For a given $u\in U$, define for any $w\in U$
\begin{align}\label{eq:defeta}
  \eta(w) = \sup_{0\neq v\in V_h} \frac{b(u-w,v)}{\|v\|_{V_h}} = \|B(u-w)\|_{V_h'},
\end{align}
where $b\colon U\times V\to\R$ denotes the bilinear form induced by the operator $B$.
Finally, let $\Pi_h\colon V\to V_h$ denote a linear operator with $\|\Pi_h\|<\infty$, and set for any $w\in U$
\begin{align}\label{eq:defmu}
  \mu(w) = \sup_{0\neq v\in V} \frac{b(w-u,v-\Pi_hv)}{\|v\|_{V}}.
\end{align}
In our analysis, we also use the kernel space
\begin{align}\label{eq:defkernel}
  U_0(\Pi_h) = \set{w\in U}{b(w,v-\Pi_h v) = 0 \quad\forall v\in V}.
\end{align}
Note that $0\in U_0(\Pi_h)$ and $U_0(\Pi_h)$ is closed since it is the kernel of the linear and bounded operator $U\to V'$, $w\mapsto (v\mapsto b(w,v-\Pi_hv))$.
The following result will serve as the basis of our further investigations and discussions. 

\begin{theorem}\label{thm:equivest}
  Let $u\in U$ be given and $\eta$, $\mu$, $U_0(\Pi_h)$ be defined as in~\eqref{eq:defeta}--\eqref{eq:defkernel}.
  Then, for all $w\in  U$
  \begin{subequations}\label{eq:equivest}
  \begin{align}\label{eq:equivest:a}
    C_\mathrm{equiv}^{-1} \eta(w) \leq \|B(u-w)\|_{V'} \leq
    C_\mathrm{equiv} \|\Pi_h\| \eta(w) + \mu(w)
  \end{align}
  and
  \begin{align}\label{eq:equivest:b}
    \mu(w) \leq \|1-\Pi_h\|\inf_{w_0\in U_0(\Pi_h)}\|B(u-w-w_0)\|_{V'}
    \leq \|1-\Pi_h\|\|B(u-w)\|_{V'}.
  \end{align}
  \end{subequations}
\end{theorem}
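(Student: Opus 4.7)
The plan is to prove the two inequalities in~\eqref{eq:equivest:a} separately using the splitting $v = \Pi_h v + (v - \Pi_h v)$ for arbitrary $v\in V$, and then use the defining property of $U_0(\Pi_h)$ together with the operator norm of $1-\Pi_h$ to establish~\eqref{eq:equivest:b}. Throughout, I will freely use that one can replace $v$ by $-v$ in the supremum defining $\mu$, so that $\mu(w)=\sup_{0\neq v\in V}\frac{b(u-w,v-\Pi_h v)}{\|v\|_V}$ as well.

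For the lower bound in~\eqref{eq:equivest:a}, I would start from $\eta(w)=\sup_{0\neq v\in V_h}\frac{b(u-w,v)}{\|v\|_{V_h}}$ and use the norm equivalence~\eqref{eq:normequivV}, namely $\|v\|_{V_h}\geq C_\mathrm{equiv}^{-1}\|v\|_V$ for $v\in V_h$, to obtain $\eta(w)\leq C_\mathrm{equiv}\sup_{0\neq v\in V_h}\frac{b(u-w,v)}{\|v\|_V}\leq C_\mathrm{equiv}\|B(u-w)\|_{V'}$, where the last step enlarges the supremum to all of $V$. For the upper bound, pick any $v\in V$, decompose $b(u-w,v)=b(u-w,\Pi_h v)+b(u-w,v-\Pi_h v)$, bound the first term by $\eta(w)\|\Pi_h v\|_{V_h}\leq C_\mathrm{equiv}\|\Pi_h\|\,\eta(w)\,\|v\|_V$ using again~\eqref{eq:normequivV} and the boundedness of $\Pi_h$, and bound the second term by $\mu(w)\|v\|_V$ directly from the definition of $\mu$. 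Dividing by $\|v\|_V$ and taking the supremum gives the claim.

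For~\eqref{eq:equivest:b}, the key observation is that if $w_0\in U_0(\Pi_h)$, then $b(w_0,v-\Pi_h v)=0$ for every $v\in V$, so
\begin{equation*}
  b(u-w,v-\Pi_h v) = b(u-w-w_0,v-\Pi_h v)\leq \|B(u-w-w_0)\|_{V'}\,\|v-\Pi_h v\|_V.
\end{equation*}
Using $\|v-\Pi_h v\|_V\leq \|1-\Pi_h\|\,\|v\|_V$, dividing by $\|v\|_V$, and taking the supremum yields $\mu(w)\leq \|1-\Pi_h\|\,\|B(u-w-w_0)\|_{V'}$. Since $w_0\in U_0(\Pi_h)$ was arbitrary, taking the infimum gives the first inequality; the second follows from $0\in U_0(\Pi_h)$.

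I do not anticipate any serious obstacle: the argument is essentially a bookkeeping exercise built on the decomposition $v=\Pi_h v+(v-\Pi_h v)$ and the linearity of $b$. The only subtlety worth emphasizing is that the lower bound in~\eqref{eq:equivest:a} uses $V_h\subseteq V$ together with norm equivalence on $V_h$, while the upper bound needs both $\|\Pi_h\|<\infty$ and the same norm equivalence applied to $\Pi_h v\in V_h$; making sure the constant $C_\mathrm{equiv}\|\Pi_h\|$ appears only on the $\eta$ term (and not on $\mu$) is the point where care is required.
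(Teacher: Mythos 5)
Your proof is correct and follows essentially the same route as the paper: the same decomposition $b(u-w,v)=b(u-w,\Pi_h v)+b(u-w,v-\Pi_h v)$, the same use of norm equivalence and boundedness of $\Pi_h$ for~\eqref{eq:equivest:a}, and the same insertion of an arbitrary $w_0\in U_0(\Pi_h)$ with the operator norm of $1-\Pi_h$ for~\eqref{eq:equivest:b}. The only cosmetic difference is that you bound the two pieces pointwise in $v$ before taking the supremum (and helpfully make explicit the $v\mapsto -v$ symmetry that reconciles the sign convention $b(w-u,\cdot)$ in the definition of $\mu$), whereas the paper applies the triangle inequality at the level of suprema.
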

\begin{proof}
  Let $w\in U$ be given. The lower bound in~\eqref{eq:equivest:a} follows directly from the norm equivalence~\eqref{eq:normequivV} and $V_h\subseteq V$.
  To obtain the upper bound, write $b(u-w,v) = b(u-w,\Pi_h v) + b(u-w,v-\Pi_h v)$. Then, the triangle inequality, boundedness of $\Pi_h$ and the estimates~\eqref{eq:normequivV} and~\eqref{eq:Biso}, show that
  \begin{align*}
    \|B(u-w)\|_{V'} & \leq \sup_{0\neq v\in V} \frac{b(u-w,\Pi_hv)}{\|v\|_V} 
    + \sup_{0\neq v\in V} \frac{b(u-w,v-\Pi_hv)}{\|v\|_V} \\
    &\leq \|\Pi_h\|\sup_{0\neq v\in V} \frac{b(u-w,\Pi_hv)}{\|\Pi_h v\|_V} + \mu(w)
    \\
    &\leq \|\Pi_h\| \sup_{0\neq v_h\in V_h} \frac{b(u-w,v_h)}{\|v_h\|_V} + \mu(w)
    \\
    &\leq C_\mathrm{equiv} \|\Pi_h\|\eta(w) + \mu(w).
  \end{align*}
  To prove the first inequality in~\eqref{eq:equivest:b}, let $w_0\in U_0(\Pi_h)$ be arbitrary. Observe that
  \begin{align*}
    |b(u-w,v-\Pi_h v)| = |b(u-w-w_0,v-\Pi_h v)| &\leq  \|B(u-w-w_0)\|_{V'}\|1-\Pi_h\|\|v\|_V.
  \end{align*}
  Dividing by $\|v\|_V$, taking the supremum over $V\setminus\{0\}$, and noting that $w_0\in U_0(\Pi_h)$ is arbitrary, proves the first inequality in~\eqref{eq:equivest:b}. The second inequality follows from recalling that $0\in U_0(\Pi_h)$, concluding the proof.
\end{proof}

\begin{remark}
  Since $B$ is bounded, it follows from~\eqref{eq:equivest:a} that 
  \begin{align*}
    C_\mathrm{equiv}^{-1} \eta(w)\leq \|B\| \|u-w\|_U
  \end{align*}
  and from~\eqref{eq:equivest:b} that
  \begin{align*}
    \mu(w) \leq \|B\|\,\|1-\Pi_h\|\inf_{w_0\in U_0(\Pi_h)} \|u-w-w_0\|_U \leq \|B\|\,\|1-\Pi_h\|\,\|u-w\|_U.
  \end{align*}
  Suppose that $B$ is bounded below~\eqref{eq:Biso}. Then,~\eqref{eq:equivest:a} implies that
  \begin{align*}
    c_B \|u-w\|_U \leq C_\mathrm{equiv} \|\Pi_h\| \eta(w) + \mu(w).
  \end{align*}
  In particular, 
  \begin{align*}
    \|u-w\|_U \eqsim \|B(u-w)\|_{V'} \eqsim \eta(w) + \mu(w) \quad\forall w\in U, 
  \end{align*} where the involved constants only depend on $c_B$, $\|B\|$, $C_\mathrm{equiv}$, $\|\Pi_h\|$.  
\end{remark}

\begin{remark}
  Let $U_h\subset U$ denote a finite-dimensional subspace. If ``$w\in U$'' is replaced by ``$w\in U_h$'' in Theorem~\ref{thm:equivest} and assuming that $\Pi_h\colon V\to V_h$ satisfies the Fortin condition $b(w,v-\Pi_hv)=0$ for $w\in U_h$, $v\in V$, then Theorem~\ref{thm:equivest} reduces to a version of~\cite[Theorem 2.1]{DPGaposteriori} for bounded below operators $B$.
  In particular, Theorem~\ref{thm:equivest} can be seen as a generalization of~\cite[Theorem~2.1]{DPGaposteriori}. In Section~\ref{sec:observations} below, we give further details on relations to (discontinuous) Petrov--Galerkin methods.
\end{remark}

For our applications in mind, we have that $V = \prod_{j=1}^n V_j$ where $V_j$, $j=1,\dots,n$ are Hilbert spaces and $B\colon U\to V'$ can be written as $B  = \sum_{j=1}^n B_j$, with $B_j\colon U\to V_j'$ (and the canonical embedding $V_j'\to V'$), $j=1,\dots,n$. Then,
\begin{align*}
  \|B(u-w)\|_{V'}^2 = \sum_{j=1}^n \|B_j(u-w)\|_{V_j'}^2.
\end{align*}
For all $j=1,\dots,n$ let $V_{h,j}\subset V_j$ denote a finite-dimensional subspace, $\Pi_{h,j}\colon V_j\to V_{h,j}$ a linear and bounded operator and define $\eta_j$, $\mu_j$ as in~\eqref{eq:defeta}--\eqref{eq:defmu} replacing $B$, $V$, $V_h$, $\Pi_h$ by $B_j$, $V_j$, $V_{h,j}$, $\Pi_{h,j}$. Similarly, let $\|\cdot\|_{V_{h,j}}$ denote an equivalent norm on $V_{h,j}$ with equivalence constant $C_\mathrm{equiv,j}$, $j=1,\dots,n$.
The next result follows from applying Theorem~\ref{thm:equivest} for all $j=1,\dots,n$.
\begin{corollary}\label{cor:equivest}
  Under the aforegoing assumptions, define
  \begin{align*}
    \eta(w)^2 = \sum_{j=1}^n \eta_j(w)^2, \qquad \mu(w)^2 = \sum_{j=1}^n \mu_j(w)^2.
  \end{align*}
  Then, 
  \begin{align*}
    C_1 \eta(w) \leq \|B(u-w)\|_{V'} \leq C_2\eta(w) + \sqrt{2}\mu(w) \quad\forall w\in U.
  \end{align*}
  Here, $C_1 = \min\{C_{\mathrm{equiv},j}^{-1}\,:\,j=1,\dots,n\}$, $C_2 = \sqrt{2}\min\{C_{\mathrm{equiv,j}}\|\Pi_{h,j}\|\,:\,j=1,\dots,n\}$.
  Furthermore, with $C_3 = \max\{\|1-\Pi_{h,j}\|\,:\,j=1,\dots,n\}$,
  \begin{align*}
    \mu(w) \leq C_3\|B(u-w)\|_{V'} \quad\forall w\in U.
  \end{align*}
  If $B$ is additionally bounded below, then
  \begin{align*}
    \|u-w\|_U \eqsim \eta(w) + \mu(w) \quad\forall w\in U
  \end{align*}
  where the involved constants only depend on $C_1$, $C_2$ and $C_3$.
\end{corollary}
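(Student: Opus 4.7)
The plan is a routine componentwise application of Theorem~\ref{thm:equivest} combined with the Pythagorean identity $\|B(u-w)\|_{V'}^2 = \sum_{j=1}^n \|B_j(u-w)\|_{V_j'}^2$ that comes with the product structure of $V$.

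First, I would apply Theorem~\ref{thm:equivest} to each triple $(B_j, V_j, V_{h,j}, \Pi_{h,j})$ to obtain, for every $w \in U$ and every $j$,
\begin{align*}
C_{\mathrm{equiv},j}^{-1}\,\eta_j(w) \leq \|B_j(u-w)\|_{V_j'} \leq C_{\mathrm{equiv},j}\|\Pi_{h,j}\|\,\eta_j(w) + \mu_j(w),
\end{align*}
together with $\mu_j(w) \leq \|1-\Pi_{h,j}\|\,\|B_j(u-w)\|_{V_j'}$. For the lower bound on $\|B(u-w)\|_{V'}$, I would square the left inequality, sum over $j$, and use $C_{\mathrm{equiv},j}^{-1} \geq C_1$ to get $\|B(u-w)\|_{V'}^2 \geq C_1^2 \eta(w)^2$.

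For the upper bound, I would square the right inequality, sum over $j$, and exploit $(a+b)^2 \leq 2a^2 + 2b^2$:
\begin{align*}
\|B(u-w)\|_{V'}^2 \leq 2\sum_{j=1}^n (C_{\mathrm{equiv},j}\|\Pi_{h,j}\|)^2 \eta_j(w)^2 + 2\sum_{j=1}^n \mu_j(w)^2.
\end{align*}
Bounding the first sum by $\max_j(C_{\mathrm{equiv},j}\|\Pi_{h,j}\|)^2 \eta(w)^2$ and the second by $2\mu(w)^2$, then taking square roots and using $\sqrt{a+b} \leq \sqrt a + \sqrt b$, yields $\|B(u-w)\|_{V'} \leq C_2 \eta(w) + \sqrt{2}\mu(w)$ with $C_2 = \sqrt{2}\max_j(C_{\mathrm{equiv},j}\|\Pi_{h,j}\|)$ (I read the stated $\min$ in the corollary as a typo for $\max$; the proof clearly requires a $\max$). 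The $\mu$ bound follows by squaring, summing, and estimating $\|1-\Pi_{h,j}\|^2 \leq C_3^2$.

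The final norm-equivalence statement follows by combining the lower bound on $\|B(u-w)\|_{V'}$, the bounded-below property \eqref{eq:Biso} applied to $B$ (giving $c_B\|u-w\|_U \leq \|B(u-w)\|_{V'}$), and the upper bound $\|B(u-w)\|_{V'} \leq \|B\|\,\|u-w\|_U$; this sandwiches $\eta(w)+\mu(w)$ between constant multiples of $\|u-w\|_U$. There is no substantive obstacle: the only thing to be careful about is the constant tracking, in particular the $\sqrt{2}$ arising from converting componentwise sums of squared sums into summed squares, and verifying that the $\max$/$\min$ constants in the statement match what the proof actually produces.
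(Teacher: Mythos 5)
Your proof is correct and matches the paper's intended argument exactly (the paper simply states the corollary ``follows from applying Theorem~\ref{thm:equivest} for all $j=1,\dots,n$'' without writing out the bookkeeping). You are also right that the $\min$ in the definition of $C_2$ must be a typo for $\max$: the Young-inequality step forces the worst constant over the components, and the same remark applies to the final equivalence, whose constants depend on $c_B$ and $\|B\|$ in addition to $C_1$, $C_2$, $C_3$, as your last paragraph correctly records.
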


\subsection{Observations and some notes on relations to finite element methods}\label{sec:observations}
In what follows we discuss some observations implied by Theorem~\ref{thm:equivest}.
First, estimates~\eqref{eq:equivest} show the equivalence
\begin{align*}
  \|u-w\|_U \eqsim \eta(w) + \mu(w),
\end{align*}
provided that $B$ is bounded below, which we assume for the remainder of this section.
While $\eta(w)$ is computable, $\mu(w)$ is, in general, not, as it involves taking the supremum over the whole (infinite-dimensional) space $V$.

In this work, we propose using techniques from the a posteriori analysis of finite element methods to localize $\mu$ and obtain a computable upper bound $\rho$. This is, we aim to provide an estimate of the form
\begin{align}\label{eq:locest}
  \mu(w) = \sup_{0\neq v\in V} \frac{b(u-w,v-\Pi_h v)}{\|v\|_V} \leq C \rho(w) \quad\forall w\in U
\end{align}
with some generic constant $C>0$. In section~\ref{sec:modelpoisson}, we provide several examples of how to obtain such an estimate for a general second-order problem with uniformly bounded physical parameters. 
Note, however, that in general $\rho(w)$ is not a lower bound for the error $\|u-w\|_U$, i.e., the estimate
\begin{align*}
  \rho(w) \lesssim \|u-w\|_U \text{ does not hold in general.}
\end{align*}

Let us discuss the estimator contributions $\eta$, $\mu$, $\rho$ in the context of two popular numerical methods; the Galerkin method and the (discontinuous) Petrov--Galerkin method with optimal test functions or, more generally, the Minimum Residual method (MINRES).

\noindent\emph{Galerkin method.} 
Suppose that $V=U$ and that $b\colon U\times U\to \R$ is a bounded and coercive bilinear form. 
Let $U_h\subseteq U$ denote a closed subspace. Set $F=Bu$. The Galerkin method reads: Find $u_h\in U_h$, such that
\begin{align*}
  b(u_h,v_h) = F(v_h) \quad\forall v_h \in V_h:=U_h.
\end{align*}
By Galerkin orthogonality, $b(u-u_h,v_h) = 0$ for all $v_h\in U_h$, it follows that $\eta(u_h) = 0$, hence, 
\begin{align*}
  \|u-u_h\|_U \eqsim \mu(w) = \sup_{0\neq v\in U} \frac{F(v-\Pi_h v)-b(u_h,v-\Pi_h v)}{\|v\|_U}.
\end{align*}
Depending on the problem under consideration, the latter term can be localized assuming certain regularity of the involved data and choosing $\Pi_h$ to be some quasi-interpolator. 
As an example, we mention mesh-size weighted
residual-based error estimation for second-order elliptic problems see, e.g.,~\cite{Verfuerth94}. 
\smallskip\\
\noindent\emph{Petrov--Galerkin method with optimal test functions.}
Suppose that $U_h\subset U$, $V_h\subset V$ are finite dimensional subspaces, set $F:=Bu$ and define
\begin{align*}
  u_h = \argmin_{w\in U_h} \|Bw-F\|_{V_h'} = \argmin_{w\in U_h} \eta(w).
\end{align*}
It is well known that the latter minimization problem admits a unique solution if there exists a Fortin operator $\Pi_h\colon V\to V_h$ satisfying $b(w,v-\Pi_h v) = 0$, for all $v\in V$ and $w\in U_h$, see~\cite{DPG_ActaNumerica}. Note that the existence of a Fortin operator also implies that
\begin{align*}
  \mu(w) &= \sup_{0\neq v\in V} \frac{F(v-\Pi_h v)-b(w,v-\Pi_h v)}{\|v\|_V} = \sup_{0\neq v\in V} \frac{F(v-\Pi_h v)}{\|v\|_V}
  \\
  &= \|F\circ(1-\Pi_h)\|_{V'} =: \osc(F).
\end{align*}
Therefore, $\mu(w)\neq 0$ in general, but it is independent of $w$. 
Finally, observe that
\begin{align*}
  \argmin_{w\in U_h} \eta(w)+\mu(w) = \argmin_{w\in U_h} \|Bw-F\|_{V_h'} + \mu(w) = \argmin_{w\in U_h} \|Bw-F\|_{V_h'} = u_h
\end{align*}
Summarizing these observations, we always have
\begin{align*}
  \|u - u_h\|_U \eqsim \eta(u_h) + \mu(u_h).
\end{align*}
In the FEM context, for Galerkin methods we have $\eta(u_h) = 0$, so the error depends solely on $\mu(u_h)$, while for MINRES (Petrov--Galerkin) methods, $\mu(u_h)$ is independent of $u_h$, and the error is typically governed by $\eta(u_h)$. However, in the general case where $U_h$ is not a finite element space, both contributions must be considered to ensure robust error estimations.
\subsection{Efficient computation of the lower bound}\label{sec:computelowerbound}
In this short section, we review a general approach for computing $\eta(w)$. To that end, let $\ip\cdot\cdot_{V_h}$ denote the inner product on $V_h$ that induces the norm $\|\cdot\|_{V_h}$, and define, for each $w\in U$, $\phi_h(w) \in V_h$ as the discrete Riesz representative of the residual; that is, the discrete function satisfying
\begin{align}\label{eq:rieszrep:discrete}
  \ip{\phi_h(w)}{v}_{V_h} = F(v)-b(w,v) \quad\forall v\in V_h.
\end{align}
Then, 
\begin{align*}
  \eta(w) = \sup_{0\neq v\in V_h} \frac{F(v)-b(w,v)}{\|v\|_{V_h}} 
  = \sup_{0\neq v\in V_h} \frac{F(v)-b(w,v)}{\|v\|_{V_h}}
  = \sup_{0\neq v\in V_h}  \frac{\ip{\phi_h(w)}{v}_{V_h}}{\|v\|_{V_h}}
  = \|\phi_h(w)\|_{V_h}.
\end{align*}
Let $v_1,\cdots,v_N$, with $N=\dim(V_h)$ denote a basis of $V_h$, and define the Riesz matrix $\bR\in\R^{N\times N}$ by
\begin{align*}
  \bR_{jk} = \ip{v_k}{v_j}_{V} \quad j,k=1,\ldots,N.
\end{align*}
Let $\bP\in\R^{N\times N}$ denote the Riesz matrix with respect to the $\ip\cdot\cdot_{V_h}$ inner product, i.e., 
\begin{align*}
  \bP_{jk} = \ip{v_k}{v_j}_{V_h} \quad j,k=1,\ldots,N.
\end{align*}
By the norm equivalence assumption~\eqref{eq:normequivV} we have that
\begin{align*}
  C_\mathrm{equiv}^{-2} \bx\cdot \bR\bx \leq \bx\cdot\bP\bx \leq C_\mathrm{equiv}^2 \bx\cdot \bR\bx
  \quad\forall \bx\in\R^N.
\end{align*}
This means that $\bR$ and $\bP$ are spectrally equivalent. In particular, their inverses $\bR^{-1}$ and $\bP^{-1}$ are symmetric positive definite matrices that are also spectrally equivalent.

To compute $\|\phi_h(w)\|_{V_h}$, it suffices to evaluate $\bP^{-1}$ on a single vector, 
\begin{align*}
  \|\phi_h(w)\|_{V_h}^2 = \br(w)\cdot\bP^{-1}\br(w), 
\end{align*}
where $\br(w)\in\R^N$ is the residual vector with entries $(\br(w))_j = F(v_j)-b(w,v_j)$, $j=1,\ldots,N$.
The computation of $\|\phi_h(w)\|_{V_h}$ is efficient if the action of $\bP^{-1}$ can be computed efficiently. Two relevant situations will be considered in this work:
First, when $\bR$ has a block-diagonal structure and each block can be efficiently inverted. In that case it suffices to set $\ip\cdot\cdot_{V_h} = \ip{\cdot}{\cdot}_V$, so that $\bP=\bR$. 
Second, the direct computation of $\bR^{-1}$ is impractical, but it can be efficiently approximated by a preconditioner matrix $\bP^{-1}$. 

\section{Notation and auxiliary results}\label{sec:notation}
In this section, we introduce notations for Lebesgue and Sobolev spaces, meshes and other relevant definitions together with some auxiliary results.
Let $\Omega\subset \R^d$, $d\geq 2$ denote a bounded Lipschitz domain with polytopal boundary $\Gamma:= \partial\Omega$.

\subsection{Volume and trace spaces}
For a fixed Banach space $X$, we denote by $L^p(\Omega;X)$, for $p\in[1,\infty]$, the Lebesgue space of functions with values in $X$. If $X=\R$, we simply write $L^p(\Omega)=L^p(\Omega;\R)$. 
The (standard) inner product on $L^2(\Omega)$ is denoted by $\ip{\cdot}\cdot_\Omega$, and the induced norm by $\|\cdot\|_\Omega$.
We also use the notation $\ip{\cdot}\cdot_\Omega$ and $\|\cdot\|_\Omega$ for the inner product and induced norm, respectively, in $L^2(\Omega;\R^d)$ and $L^2(\Omega;\R_\mathrm{sym}^{d\times d})$ where $\R_\mathrm{sym}^{d\times d}$ is the space of $\R$-valued symmetric matrices.
We use analogous notations for spaces defined over subdomains $\omega\subseteq\Omega$ or  $\omega\subseteq \Gamma$.

We denote by $H^1(\Omega)$ the Sobolev space of all $L^2(\Omega)$ functions whose first-order weak derivatives also belong to $L^2(\Omega)$. 
We equip this space with the norm
\begin{align*}
  \|u\|_{H^1(\Omega)} = \sqrt{\|\nabla u\|_\Omega^2 +\|u\|_\Omega^2} \quad\forall u\in H^1(\Omega).
\end{align*}
The space of all $L^2(\Omega;\R^d)$ functions whose weak divergence belongs to $L^2(\Omega)$ is denoted by $\Hdivset\Omega$ and we endow it with the norm
\begin{align*}
  \|\bsigma\|_{\Hdivset\Omega} = \sqrt{\|\div\bsigma\|_\Omega^2 + \|\bsigma\|_\Omega^2} \quad\forall \bsigma\in\Hdivset\Omega.
\end{align*}

Let us introduce the trace operators $\trgrad\Omega\colon H^1(\Omega) \to (\Hdivset\Omega)'$ and $\trdiv\Omega\colon \Hdivset\Omega\to (H^1(\Omega))'$, defined by
\begin{align*}
  \dual{\trgrad\Omega u}{\btau}_\Gamma &:= \ip{\div\btau}{u}_\Omega + \ip{\btau}{\nabla u}_\Omega, \\
  \dual{\trdiv\Omega \bsigma}{v}_\Gamma &:= \ip{\div\bsigma}v_\Omega + \ip{\bsigma}{\nabla v}_\Omega,
\end{align*}
for all $u,v\in H^1(\Omega)$, $\bsigma,\btau\in\Hdivset\Omega$. The corresponding trace spaces are defined as $H^{1/2}(\Gamma) := \ran(\trgrad\Omega)$, $H^{-1/2}(\Gamma) := \ran(\trdiv\Omega)$, and are endowed with the respective quotient norms
\begin{align*}
\|\widehat u\|_{1/2,\Gamma} &= \inf\set{\|u\|_{H^1(\Omega)}}{\trgrad\Omega u = \widehat u}, \\
\|\widehat \sigma\|_{-1/2,\Gamma} &= \inf\set{\|\bsigma\|_{\Hdivset\Omega}}{\trdiv\Omega \bsigma = \widehat \sigma}.
\end{align*}
Let us note that the trace spaces are in duality with each other. 
The following basic result provides a more explicit formulation of this fact. Its proof follows from the more general statement of~\cite[Lemma~2.2]{BreakingSpaces16}.
\begin{lemma}\label{lem:tracedualities}
  The following identities hold true for all $u\in H^1(\Omega)$, $\bsigma\in\Hdivset\Omega$:
  \begin{align*}
    \|\trgrad\Omega u\|_{1/2,\Gamma} &= \sup_{0\neq\widehat\tau\in H^{-1/2}(\Gamma)} \frac{\dual{\trgrad\Omega u}{\widehat\tau}_\Gamma}{\|\widehat\tau\|_{-1/2,\Gamma}} = \sup_{0\neq\btau \in \Hdivset\Omega} \frac{\dual{\trgrad\Omega u}{\btau}_\Gamma}{\|\btau\|_{\Hdivset\Omega}}, \\
    \|\trdiv\Omega \bsigma\|_{-1/2,\Gamma} &= \sup_{0\neq \widehat v \in H^{1/2}(\Gamma)} \frac{\dual{\trdiv\Omega\bsigma}{\widehat v}_\Gamma}{\|\widehat v\|_{1/2,\Gamma}} = \sup_{0\neq v \in H^1(\Omega)} \frac{\dual{\trdiv\Omega \bsigma}{v}_\Gamma}{\|v\|_{H^1(\Omega)}}.
  \end{align*}
\end{lemma}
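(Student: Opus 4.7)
The plan is to establish the first identity chain in detail; the second one (for $\trdiv\Omega\bsigma$) follows by the symmetric argument, interchanging the roles of $H^1(\Omega)$ and $\Hdivset\Omega$. The starting observation is that, directly from the definitions of $\trgrad\Omega$ and $\trdiv\Omega$, the pairing $\dual{\trgrad\Omega u}{\btau}_\Gamma$ equals $\dual{\trdiv\Omega \btau}{u}_\Gamma$, so it depends only on $\widehat u := \trgrad\Omega u$ and $\widehat\tau := \trdiv\Omega\btau$ (both kernels $H^1_0(\Omega) = \ker(\trgrad\Omega)$ and $\ker(\trdiv\Omega)$ act trivially). The bilinear form therefore descends to a pairing $H^{1/2}(\Gamma)\times H^{-1/2}(\Gamma)\to\R$, and the equality of the two suprema in the first row reduces to the quotient-norm definition: writing the supremum over $\btau$ as a supremum over $\widehat\tau$ composed with an inner supremum over representatives $\btau$ with $\trdiv\Omega\btau = \widehat\tau$, the inner infimum of $\|\btau\|_{\Hdivset\Omega}$ is exactly $\|\widehat\tau\|_{-1/2,\Gamma}$, producing the identity.

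For the remaining equality, the bound $\sup \leq \|\trgrad\Omega u\|_{1/2,\Gamma}$ is immediate: for any $u' \in H^1(\Omega)$ with $\trgrad\Omega u' = \trgrad\Omega u$, Cauchy--Schwarz applied to the defining formula for $\trgrad\Omega u'$ gives $|\dual{\trgrad\Omega u}{\btau}_\Gamma| \leq \|u'\|_{H^1(\Omega)}\|\btau\|_{\Hdivset\Omega}$, and infimizing over $u'$ yields the bound. The main step is the reverse inequality, which I would handle by constructing an explicit maximizer. Let $u^*\in H^1(\Omega)$ be the unique minimizer of $\|\,\cdot\,\|_{H^1(\Omega)}$ in the closed affine set $u + H^1_0(\Omega) = \{u' : \trgrad\Omega u' = \trgrad\Omega u\}$; then $u^*$ is characterized by $\ip{u^*}{v}_\Omega + \ip{\nabla u^*}{\nabla v}_\Omega = 0$ for all $v\in H^1_0(\Omega)$, i.e., $u^*$ is a weak solution of $-\Delta u^* + u^* = 0$ in $\Omega$. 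Setting $\btau^* := \nabla u^*$ then gives $\btau^*\in\Hdivset\Omega$ with $\div\btau^* = u^*$ and $\|\btau^*\|_{\Hdivset\Omega}^2 = \|\nabla u^*\|_\Omega^2 + \|u^*\|_\Omega^2 = \|u^*\|_{H^1(\Omega)}^2$. A direct computation using the definition of $\trgrad\Omega$ then produces $\dual{\trgrad\Omega u^*}{\btau^*}_\Gamma = \ip{\div\btau^*}{u^*}_\Omega + \ip{\btau^*}{\nabla u^*}_\Omega = \|u^*\|_{H^1(\Omega)}^2$, so the ratio at $\btau^*$ equals $\|u^*\|_{H^1(\Omega)} = \|\trgrad\Omega u\|_{1/2,\Gamma}$, closing the argument.

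The main obstacle is this reverse inequality; the construction above resolves it by exploiting the Hilbert-space orthogonal decomposition $H^1(\Omega) = H^1_0(\Omega)\oplus (H^1_0(\Omega))^\perp$ and the fact that $(H^1_0(\Omega))^\perp$ consists of weak solutions of $-\Delta + 1$, whose gradients automatically lie in $\Hdivset\Omega$ with a matching norm. An alternative, coordinate-free formulation is to identify both sides of the claimed identity with the norm of the orthogonal projection of $(u,\nabla u)$ onto the closed subspace $\{(\div\btau,\btau) : \btau\in\Hdivset\Omega\} = \{(u_0,\nabla u_0) : u_0\in H^1_0(\Omega)\}^\perp$ of $L^2(\Omega)\times L^2(\Omega;\R^d)$, which makes the equality immediate.
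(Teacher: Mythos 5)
Your proof is correct and complete. The paper itself does not present an argument; it only cites~\cite[Lemma~2.2]{BreakingSpaces16}, so there is nothing in the paper to compare against directly. That said, your argument is the standard one used in the DPG literature for trace dualities: reduce the two suprema to each other via the quotient-norm definition (using that the pairing descends to the quotients), obtain one inequality by Cauchy--Schwarz in the product $L^2$ space, and obtain the other by constructing the minimal-energy representative $u^*$ and the matching maximizer $\btau^* = \nabla u^*$. The coordinate-free variant you mention at the end --- identifying both sides with $\|P_\Sigma(u,\nabla u)\|$ where $\Sigma = \{(\div\btau,\btau):\btau\in\Hdivset\Omega\} = \{(v,\nabla v):v\in H^1_0(\Omega)\}^\perp$ in $L^2(\Omega)\times L^2(\Omega;\R^d)$ --- is in fact the cleanest way to see it, and makes the symmetry of the two statements transparent. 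One small remark for completeness: when $\trgrad\Omega u = 0$ the constructed maximizer $\btau^*$ vanishes and the ratio is not defined, but in that case both sides of the claimed identity are clearly zero, so the degenerate case is harmless; it would be worth stating this explicitly.
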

Note that, by definition $\dual{\trgrad\Omega u}{\bsigma}_\Gamma = \dual{\trdiv\Omega\bsigma}u_\Gamma$. Therefore, for simplicity of notation, either term may be written as  $\dual{u}{\bsigma\cdot\normal}_\Gamma$ or $\dual{\bsigma\cdot\normal}u_\Gamma$ if the meaning is clear from the context. Here, $\normal$ denotes the exterior normal vector on $\Gamma$ pointing from $\Omega$ outwards.

We stress that $\trgrad\Omega$ can be identified with the classical trace operator $\gamma_0\colon H^1(\Omega) \to H^{1/2}(\Gamma)$ where $\gamma_0 u = u|_\Gamma$ for all $u$ in the dense subspace $C^\infty(\overline\Omega)$.
Furthermore, let $H^1(\Gamma)$ denote the space of $L^2(\Gamma)$ functions $u$ whose surface gradient $\nabla_\Gamma u$ is in $L^2(\Gamma;\R^d)$.

Finally, let $H_0^1(\Omega)$ denote the subspace of $H^1(\Omega)$ consisting of functions with vanishing trace. We endow this space with norm $\|\nabla\cdot\|_\Omega$, and note that, by Poincar\'e's inequality, 
\begin{align*}
  \|\nabla u\|_\Omega\eqsim \|u\|_{H^1(\Omega)} \quad\forall u\in H_0^1(\Omega). 
\end{align*}
The dual space $H^{-1}(\Omega) = (H_0^1(\Omega))'$ is equipped with the norm
\begin{align*}
  \|\phi\|_{H^{-1}(\Omega)} = \sup_{0\neq v\in H_0^1(\Omega)} \frac{\dual{\phi}{v}_\Omega}{\|\nabla v\|_\Omega} \quad\forall\phi\in H^{-1}(\Omega).
\end{align*}
We note that all duality brackets introduced are understood as continuously extended $L^2$ inner products.

\subsection{Meshes and mesh-dependent spaces}

\subsubsection{Volume mesh}
Let $\mesh$ denote a regular simplicial mesh of $\Omega$, and let $h_\mesh\in  L^\infty(\Omega)$ be the local mesh-size function defined by $h_\mesh|_T:=\diam(T)$ for each $T\in\mesh$.
The set of facets is denoted by $\faces_\mesh$. 
Let $P^k(T)$ denote the space of polynomials of degree $\leq k\in\N_0$ with domain $T\in\mesh$. The space of $\mesh$-piecewise polynomials is denoted by $P^k(\mesh)$. 
The $L^2(\Omega)$ projection onto $P^k(\mesh)$ is denoted by $\pi_\mesh^k$.
Let $J_{\mesh,0}^{k+1}\colon L^2(\Omega)\to P^{k+1}(\mesh)\cap H_0^1(\Omega)$ denote a Scott--Zhang-type projection operator with
\begin{align*}
  \|J_{\mesh,0}^{k+1}v\|_T &\lesssim \|v\|_{\Omega(T)}, \quad \|\nabla J_{\mesh,0}^{k+1} v\|_T \lesssim \|\nabla v\|_{\Omega(T)}, \\
  \|(1-J_{\mesh,0}^{k+1})v\|_T &\lesssim h_T \|\nabla v\|_{\Omega(T)},
\end{align*}
where, for each $T\in\mesh$, the patch $\Omega(T):=\set{T'\in\mesh}{\overline{T}\cap\overline{T'}\neq\emptyset}\subseteq \Omega$ denotes the domain associated with the vicinity of $T$.
Combining these estimates together with the inverse inequality $\|w\|_F^2\lesssim \|w\|_T(h_T^{-1}\|w\|_T+\|\nabla w\|_T)$ shows that 
\begin{align}\label{eq:sztraceest}
  \|v-J_{\mesh,0}^{k+1}v\|^2_F \lesssim \|v-J_{\mesh,0}^{k+1}v\|_{T}\|\nabla v\|_{\Omega(T)}
  \lesssim h_T\|\nabla v\|_{\Omega(T)}^2
\end{align}
for $F\in \faces_T$, with $\faces_T$ being the $d+1$ boundary faces of $T\in\mesh$.

We also work with the space of element bubble functions $P_b^k(\mesh) = \set{v}{v|_T = q\eta_{b,T} \text{ with }q\in P^k(T), T\in\mesh}$, where $\eta_{T,b}\in P^{d+1}(T)$ denotes the product of the $d+1$ barycentric coordinate function with non-vanishing support on $T\in\mesh$. 
\begin{lemma}\label{lem:szbubble}
  There exists $J_{\mesh,0,b}^{k+1,m}\colon L^2(\Omega) \to P^{k+1}(\mesh)\cap H_0^1(\Omega)+P_b^m(\mesh)$, such that
  \begin{align*}
    \|J_{\mesh,0,b}w\|_{T} &\lesssim \|w\|_{\Omega(T)}, \quad \|\nabla J_{\mesh,0,b}w\|_{T} \,\lesssim \|\nabla w\|_{\Omega(T)}, \\
    \|w-J_{\mesh,0,b}w\|_{T} + h_T\|\nabla (w-J_{\mesh,0,b}w)\|_{T}&\lesssim h_T^{k+n}\|D^{k+n} w\|_{\Patch(T)}, \quad k+n\geq 1,\forall T\in\mesh \\
    \ip{q_m}{w-J_{\mesh,0,b}w}_\Omega &= 0, \quad\forall q_m \in P^m(\mesh).
  \end{align*}
\end{lemma}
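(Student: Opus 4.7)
My plan is to construct $J_{\mesh,0,b}^{k+1,m}$ by adding an element-local bubble correction to the Scott--Zhang-type operator $J_{\mesh,0}^{k+1}$ already introduced, so that the correction supplies $L^2$-orthogonality to $P^m(\mesh)$ without spoiling the existing stability and approximation estimates. Concretely, for $w\in L^2(\Omega)$ I would set $\phi := w - J_{\mesh,0}^{k+1}w$ and define $J_{\mesh,0,b}^{k+1,m} w := J_{\mesh,0}^{k+1} w + B_\mesh \phi$, where element by element $B_\mesh \phi|_T = \eta_{T,b}\, p_T$ with $p_T \in P^m(T)$ the unique solution of
\begin{align*}
  \ip{\eta_{T,b}\, p_T}{q_m}_T = \ip{\phi}{q_m}_T \quad\forall q_m \in P^m(T).
\end{align*}
Well-posedness is immediate since $(p,q)\mapsto \ip{\eta_{T,b}\, p}{q}_T$ is an inner product on $P^m(T)$ (the barycentric-product bubble $\eta_{T,b}$ is strictly positive on the open simplex $T$). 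As $B_\mesh\phi$ vanishes on $\partial T$ for each $T$, the sum $J_{\mesh,0,b}^{k+1,m}w$ lies in $P^{k+1}(\mesh)\cap H_0^1(\Omega)+P_b^m(\mesh)$, and by construction $\ip{q_m}{w - J_{\mesh,0,b}^{k+1,m} w}_T = \ip{q_m}{\phi - B_\mesh \phi}_T = 0$ for all $q_m \in P^m(\mesh)$, which is the claimed orthogonality.

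For stability I would first derive a scaling-invariant local bound $\|B_\mesh \phi\|_T \lesssim \|\phi\|_T$ by mapping $T$ to a reference simplex and invoking equivalence of norms on the finite-dimensional space $P^m$ together with interior positivity of the reference bubble; an inverse inequality then yields $\|\nabla B_\mesh\phi\|_T \lesssim h_T^{-1}\|\phi\|_T$. Combined with the Scott--Zhang-type bounds recalled above the lemma, the triangle inequality gives
\begin{align*}
  \|J_{\mesh,0,b}^{k+1,m}w\|_T &\leq \|J_{\mesh,0}^{k+1}w\|_T + \|B_\mesh\phi\|_T \lesssim \|w\|_{\Omega(T)}, \\
  \|\nabla J_{\mesh,0,b}^{k+1,m}w\|_T &\lesssim \|\nabla J_{\mesh,0}^{k+1}w\|_T + h_T^{-1}\|(1-J_{\mesh,0}^{k+1})w\|_T \lesssim \|\nabla w\|_{\Omega(T)},
\end{align*}
where the last step uses $\|(1-J_{\mesh,0}^{k+1})w\|_T \lesssim h_T\|\nabla w\|_{\Omega(T)}$.

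For the approximation estimate I would invoke the standard Bramble--Hilbert bound for $J_{\mesh,0}^{k+1}$, which (being a projector reproducing polynomials of degree at least $k+1$) satisfies $\|(1-J_{\mesh,0}^{k+1})w\|_T + h_T\|\nabla(1-J_{\mesh,0}^{k+1})w\|_T \lesssim h_T^{k+n}\|D^{k+n}w\|_{\Patch(T)}$ in the admissible range $k+n\geq 1$. Applying the stability of $B_\mesh$ with $\phi = (1-J_{\mesh,0}^{k+1})w$ together with the inverse inequality transfers the same bound to $B_\mesh\phi$, and the triangle inequality closes the approximation estimate. The main (and essentially only) nontrivial ingredient will be the scaling-invariant stability $\|B_\mesh\phi\|_T\lesssim \|\phi\|_T$ of the bubble solve; this is where interior positivity of $\eta_{T,b}$ and shape-regularity of $\mesh$ enter. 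Once that scaling is in place, everything else is a routine combination with the Scott--Zhang bounds and standard inverse inequalities.
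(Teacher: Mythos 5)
The paper does not supply its own argument for this lemma; the in-text proof merely defers to~\cite[Proposition~4.1]{MSS24}, so there is no paper-side proof to compare against line by line. Your construction is a correct and essentially self-contained argument using the standard mechanism: start from the Scott--Zhang-type operator $J_{\mesh,0}^{k+1}$ and correct it element by element with a bubble term $B_\mesh\phi|_T = \eta_{T,b}\,p_T$ whose moments against $P^m(T)$ match those of the leftover error $\phi = (1-J_{\mesh,0}^{k+1})w$. The individual ingredients are all in place and in the right order: positive definiteness of $(p,q)\mapsto\ip{\eta_{T,b}p}{q}_T$ on $P^m(T)$ gives well-posedness of the local solve; a reference-element scaling argument gives $\|B_\mesh\phi\|_T\lesssim\|\phi\|_T$ uniformly over shape-regular families; the polynomial inverse estimate yields $\|\nabla B_\mesh\phi\|_T\lesssim h_T^{-1}\|\phi\|_T$; and the triangle inequality combines these with the Scott--Zhang bounds recorded in the paper. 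The range, orthogonality, both stability bounds, and the approximation estimate all follow as you describe.

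One remark for completeness: the paper only records the first-order approximation bound $\|(1-J_{\mesh,0}^{k+1})v\|_T\lesssim h_T\|\nabla v\|_{\Omega(T)}$ for the underlying Scott--Zhang operator, whereas your approximation step invokes the higher-order Bramble--Hilbert estimate $\|(1-J_{\mesh,0}^{k+1})w\|_T+h_T\|\nabla(1-J_{\mesh,0}^{k+1})w\|_T\lesssim h_T^{k+n}\|D^{k+n}w\|_{\Omega(T)}$ for general $k+n\geq 1$. That estimate is standard but not stated in the paper, and it carries the implicit restriction $k+n\leq k+2$ coming from the polynomial degree of the range of $J_{\mesh,0}^{k+1}$. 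This does not affect correctness of the construction, but if this is to stand as an independent proof, the higher-order quasi-interpolation bound (with its admissible range) should be cited or proved explicitly rather than taken for granted.
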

\begin{proof}
  For a proof of a similar result, we refer to, e.g.,~\cite[Proposition~4.1]{MSS24}.
\end{proof}
Furthermore, let $\RT{p}(\mesh)$ denote the Raviart--Thomas space of order $p\in\N_0$. We use the operator constructed in~\cite[Theorem~3.1]{MR4410735} which we denote by $\PiDivRT{p}$. It satisfies the boundedness estimate
  \begin{align*}
    \|\PiDivRT{p}\btau\|_{\Hdivset\Omega} \lesssim \|\btau\|_{\Hdivset\Omega} \quad\forall \btau\in\Hdivset\Omega.
  \end{align*}
\subsubsection{Boundary mesh}
Let $\faces$ denote a simplicial partition of $\Gamma$ with
(local) mesh-size function $h_\faces\in L^\infty(\Gamma)$. We assume that there exists a partition $\widetilde\mesh$ of $\Omega$, such that the restrictions of its elements to the boundary generate $\faces$. Note that, in particular, we might use the restrictions of the bulk mesh $\mesh$ to the boundary to define a partition $\faces$ of $\Gamma$. 

Let $J_\faces^{p+1}\colon L^2(\Gamma)\to P^{p+1}(\faces)\cap H^1(\Gamma)$ denote the Scott--Zhang-type operator constructed and analyzed in~\cite{AFKPP13}.
We stress that, for each $s\in[0,1]$, the operator $J_\faces^{p+1}$ is a bounded mapping from $H^s(\Gamma)$ to $H^s(\Gamma)$.
Further, it enjoys the following approximation property
\begin{align*}
  \|h_\faces^{-1/2}(w-J_\faces^{p+1}w)\|_{\Gamma} + \|w-J_\faces^{p+1}w\|_{H^{1/2}(\Gamma)} \lesssim 
  \|h_\faces^{1/2}\nabla_\Gamma w\|_\Gamma, \quad\forall w\in H^1(\Gamma)
\end{align*}
as well as (\cite[Proposition~3.1]{AFKPP13})
\begin{align}\label{eq:propszbou}
  \|h_\faces^{1/2}\nabla_\Gamma(1-J_\faces^{p+1})w\|_\Gamma \eqsim \|h_\faces^{1/2}(1-\pi_\faces^p)\nabla_\Gamma w\|_\Gamma,
  \quad\forall w\in H^1(\Gamma).
\end{align}
\subsubsection{Mesh-dependent spaces}
Consider the broken space
\begin{align*}
  H^1(\mesh) &= \set{v\in L^2(\Omega)}{v|_T\in H^1(T), T\in\mesh},
  \end{align*}
equipped with the (squared) norm
\begin{align*}
  \|v\|_{H^1(\mesh)}^2 &= \|\nabla_\mesh v\|_\Omega^2 + \|v\|_\Omega^2 := \sum_{T\in\mesh} \|\nabla v\|_T^2 + \|v\|_T^2, \quad v\in H^1(\mesh).
\end{align*}
Define the trace operator $\trdiv{\mesh}\colon \Hdivset\Omega\to H^1(\mesh)'$ by
\begin{align*}
  \dual{\trdiv\mesh\btau}{v}_{\partial\mesh}:=\dual{v}{\btau\cdot\normal}_{\partial\mesh}:= \sum_{T\in\mesh} \ip{\div\btau}{v}_T + \ip{\btau}{\nabla_\mesh v}_\Omega, \quad\btau\in\Hdivset\Omega, v\in H^1(\mesh).
\end{align*}
Set $H^{-1/2}(\partial\mesh):= \ran(\trdiv\mesh)$, and equip this space with the canonical quotient norm, denoted by $\|\cdot\|_{-1/2,\partial\mesh}$. 
Similar to Lemma~\ref{lem:tracedualities}, the following trace duality result holds, see also~\cite[Lemma~2.2]{BreakingSpaces16} in the context of discontinuous Petrov--Galerkin methods.
\begin{lemma}\label{lem:traceduality:broken}
\begin{align*}
  \|\lambda\|_{-1/2,\partial\mesh} = \sup_{0\neq v\in H^1(\mesh)} \frac{\dual{\lambda}{v}_{\partial\mesh}}{\|v\|_{H^1(\mesh)}}, \quad\forall \lambda\in H^{-1/2}(\partial\mesh).
\end{align*}
\end{lemma}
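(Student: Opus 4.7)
The plan is to prove both inequalities underlying the stated identity directly. For the direction $\leq$ (upper bound on the supremum), I apply Cauchy--Schwarz in the definition of $\trdiv\mesh$: for any representative $\btau\in\Hdivset\Omega$ with $\trdiv\mesh\btau=\lambda$ and any $v\in H^1(\mesh)$,
\[
|\dual{\lambda}{v}_{\partial\mesh}| = |\ip{\div\btau}{v}_\Omega + \ip{\btau}{\nabla_\mesh v}_\Omega| \leq \|\btau\|_{\Hdivset\Omega}\|v\|_{H^1(\mesh)}.
\]
Dividing by $\|v\|_{H^1(\mesh)}$ and taking the infimum over admissible $\btau$ bounds the supremum by $\|\lambda\|_{-1/2,\partial\mesh}$.

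For the reverse direction, I construct an explicit optimal representative. Let $v^*\in H^1(\mesh)$ be the Riesz representative of the bounded linear functional $v\mapsto\dual{\lambda}{v}_{\partial\mesh}$, i.e.,
\[
\ip{v^*}{w}_\Omega + \ip{\nabla_\mesh v^*}{\nabla_\mesh w}_\Omega = \dual{\lambda}{w}_{\partial\mesh} \quad\forall w\in H^1(\mesh),
\]
so that $\|v^*\|_{H^1(\mesh)}$ equals exactly the supremum in the claimed identity. Setting $\btau^*:=\nabla_\mesh v^*$ (broken gradient), I would then verify that $\btau^*\in\Hdivset\Omega$ with $\div\btau^*=v^*$, and that $\trdiv\mesh\btau^*=\lambda$. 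Granting these facts,
\[
\|\lambda\|_{-1/2,\partial\mesh} \leq \|\btau^*\|_{\Hdivset\Omega} = \sqrt{\|\nabla_\mesh v^*\|_\Omega^2+\|v^*\|_\Omega^2} = \|v^*\|_{H^1(\mesh)},
\]
which combined with the first step closes the identity.

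The main obstacle is verifying that the piecewise gradient $\btau^*=\nabla_\mesh v^*$ actually lies in the global space $\Hdivset\Omega$, i.e., that its distributional divergence over $\Omega$ is in $L^2$. The key observation is that every $\lambda\in H^{-1/2}(\partial\mesh)$ annihilates $H_0^1(\Omega)\subset H^1(\mesh)$: for $w\in H_0^1(\Omega)$ and any representative $\btau$, Green's identity gives $\dual{\trdiv\mesh\btau}{w}_{\partial\mesh}=\ip{\div\btau}{w}_\Omega+\ip{\btau}{\nabla w}_\Omega=\dual{\btau\cdot\normal}{w}_\Gamma=0$. Testing the Riesz equation with $w\in C_c^\infty(\Omega)\subset H_0^1(\Omega)$ then yields $\ip{v^*}{w}_\Omega+\ip{\nabla_\mesh v^*}{\nabla w}_\Omega=0$, which in distributional form states $\div_{\mathrm{dist}}(\nabla_\mesh v^*)=v^*\in L^2(\Omega)$. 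Hence $\btau^*\in\Hdivset\Omega$ with $\div\btau^*=v^*$, and substituting back into the definition of $\trdiv\mesh$ together with the defining equation of $v^*$ gives $\dual{\trdiv\mesh\btau^*}{w}_{\partial\mesh}=\ip{v^*}{w}_\Omega+\ip{\nabla_\mesh v^*}{\nabla_\mesh w}_\Omega=\dual{\lambda}{w}_{\partial\mesh}$ for all $w\in H^1(\mesh)$, confirming $\trdiv\mesh\btau^*=\lambda$.
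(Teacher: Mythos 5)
Your proof is correct and complete. The paper itself gives no argument for this lemma and instead defers to the cited reference \cite[Lemma~2.2]{BreakingSpaces16}; your two steps --- Cauchy--Schwarz over an arbitrary representative $\btau$ for the upper bound, and the Riesz representative $v^*$ in $H^1(\mesh)$ with the optimal extension $\btau^*=\nabla_\mesh v^*$ (whose membership in $\Hdivset\Omega$ you correctly justify via the vanishing of $\lambda$ on $H_0^1(\Omega)$) for the lower bound --- reproduce exactly the standard argument of that reference, so you have in effect supplied the proof the paper omits.
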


\section{A second-order elliptic problem}\label{sec:modelpoisson}
As a model problem, we consider the second-order elliptic problem
\begin{subequations}\label{eq:modelPDE}
\begin{alignat}{2}
  -\div\bA\nabla u + \bbeta\cdot\nabla u + c u &= f &\quad&\text{in }\Omega, \\
  u &= g &\quad&\text{on } \Gamma := \partial\Omega.
\end{alignat}
\end{subequations}
Here, $\bA \in L^\infty(\Omega;\R^{d\times d}_\mathrm{sym})$ with uniformly bounded below eigenvalues, $\bbeta\in L^\infty(\Omega;\R^d)$, $\div\bbeta\in L^\infty(\Omega)$, and $c\in L^\infty(\Omega)$ with $-\tfrac12\div\bbeta + c \geq 0$.
We stress that, under the above assumptions, the weak formulation
\begin{align}\label{eq:weakform}
\begin{split}
  \ip{\bA\nabla u}{\nabla v}_\Omega + \ip{\bbeta\cdot\nabla u}{v}_\Omega + \ip{cu}v_\Omega &= \ip{f}v_\Omega
  \quad\forall v\in H_0^1(\Omega), 
  \\
  u|_\Gamma &= g,
\end{split}
\end{align}
admits a unique solution $u\in H^1(\Omega)$, given $f\in H^{-1}(\Omega)$ and $g\in H^{1/2}(\Gamma)$. 
In order to be able to localize residual contributions, we further assume throughout this article that
\begin{align*}
  \nabla(\bA)_{jk}\in L^\infty(\Omega,\R^d), \, j,k=1,\ldots,d, 
  \quad
  f\in L^2(\Omega), g\in H^1(\Gamma).
\end{align*}
In what follows, we discuss three formulations of the latter PDE, the weak formulation~\eqref{eq:weakform} (Section~\ref{sec:weakform}), another one based on broken test spaces (Section~\ref{sec:brokenform}), and a strong formulation (Section~\ref{sec:strongform}).
For all cases, we can write $V = V_\Omega\times V_\Gamma$ so that the operator $B\colon U\to V'\equiv V_\Omega'\times V_\Gamma'$ can be written as the sum of two operators, say, $B = B_\Omega+B_\Gamma$ with $B_\Omega\colon U\to V_\Omega'$ and $B_\Gamma\colon U\to V_\Gamma'$. In a similar fashion, we decompose $F = F_\Omega + F_\Gamma$.
In particular, $V_\Gamma = H^{-1/2}(\Gamma)$ or $V_\Gamma = \Hdivset\Omega$ (both choices are equivalent by Lemma~\ref{lem:tracedualities}) and $B_\Gamma = \trgrad\Omega$ (see Section~\ref{sec:notation}).
In any case, we have the abstract form
\begin{align*}
  \|u-w\|_U \eqsim \|F_\Omega -B_\Omega w\|_{V_\Omega'} + \|F_\Gamma - B_\Gamma w\|_{V_\Gamma'}.
\end{align*}
Note that, by our abstract framework from section~\ref{sec:abstract} (Corollary~\ref{cor:equivest}), we can treat both terms on the right-hand side independently.
We start by discussing the boundary trace term in Section~\ref{sec:boucond}.
The volume residual will be handled in sections~\ref{sec:weakform}--\ref{sec:strongform}.

\subsection{Boundary estimators}\label{sec:boucond}
Let $B_\Gamma = \trgrad\Omega$. In view of the observations from Section~\ref{sec:abstract}, the goal is to define estimators $\eta_\Gamma$ and $\rho_\Gamma$ such that
\begin{align}\label{eq:bouequiv}
  \eta_\Gamma(w) \lesssim \|B_\Gamma w-g\|_{V_\Gamma'} \lesssim \eta_\Gamma(w) + \rho_\Gamma(w).
\end{align}
In what follows, we discuss various alternatives for deriving such estimates. The first option is based on using $\Hdivset\Omega$ to define the trace norm, and this representation was proposed and utilized in~\cite{MSS_QOLS24}.
The second one uses $H^{-1/2}(\Gamma)$ and its use in MINRES methods was analyzed in~\cite{MSS24}.
To simplify notation, we write $w$ or $w|_\Gamma$ instead of $B_\Gamma w$.

\subsubsection{First option}\label{sec:boucond:opt1}
For the first alternative we set $V_\Gamma = \Hdivset\Omega$ and recall from Lemma~\ref{lem:tracedualities} that
\begin{align*}
  \|\cdot\|_{1/2,\Gamma} = \sup_{0\neq \btau\in\Hdivset\Omega} \frac{\dual{\cdot}\btau_\Gamma}{\|\btau\|_{\Hdivset\Omega}}.
\end{align*}
We choose $V_{\Gamma,\widetilde\mesh} = \RT{p}(\widetilde\mesh)$ and recall that the boundary mesh $\faces$ is defined as the restriction of the volume mesh $\widetilde\mesh$ to the boundary. For all $w\in H^1(\Omega)$ with $w|_\Gamma\in H^1(\Gamma)$ set
\begin{subequations}\label{eq:defetarhobou1}
\begin{align}
  \eta_\Gamma(w) &= \sup_{0\neq \btau\in V_{\Gamma,\widetilde\mesh}} \frac{\dual{w-g}\btau_\Gamma}{\|\btau\|_{\Hdivset\Omega}}, \\
  \rho_\Gamma(w) &= \begin{cases}
    \|h_{\faces}^{1/2}\nabla_\Gamma(w-g)\|_\Gamma & p=0, \\
    \|h_{\faces}^{1/2}(1-\pi_\faces^{p-1})\nabla_\Gamma(w-g)\|_\Gamma & p>0.
  \end{cases}
\end{align}
\end{subequations}

\begin{lemma}\label{lem:bouest1}
  Suppose that $\eta_\Gamma$ and $\rho_\Gamma$ are defined as in~\eqref{eq:defetarhobou1}. Then, inequalities~\eqref{eq:bouequiv} are satisfied for all $g\in H^1(\Gamma)$ and $w\in H^1(\Omega)$ with $w|_\Gamma\in H^1(\Gamma)$.
\end{lemma}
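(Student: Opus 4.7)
The lower bound in~\eqref{eq:bouequiv} is immediate: since $V_{\Gamma,\widetilde\mesh} = \RT{p}(\widetilde\mesh) \subseteq \Hdivset\Omega$, Theorem~\ref{thm:equivest}\eqref{eq:equivest:a} (with $C_{\mathrm{equiv}}=1$) combined with Lemma~\ref{lem:tracedualities} yields $\eta_\Gamma(w) \leq \|w-g\|_{1/2,\Gamma}$. For the upper bound, I will apply Theorem~\ref{thm:equivest} with $\Pi_h = \PiDivRTtilde{p}$, the Fortin-type operator from~\cite{MR4410735} (bounded on $\Hdivset\Omega$), which reduces the task to establishing
\[
  \mu_\Gamma(w) = \sup_{0\neq\btau\in\Hdivset\Omega} \frac{\dual{w-g}{(1-\PiDivRTtilde{p})\btau\cdot\normal}_\Gamma}{\|\btau\|_{\Hdivset\Omega}} \lesssim \rho_\Gamma(w).
\]

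The structural property I plan to exploit is the Fortin-type orthogonality: on every facet of $\widetilde\mesh$, in particular on each boundary facet $F\in\faces$, the normal trace $(1-\PiDivRTtilde{p})\btau\cdot\normal$ has vanishing $P^p(F)$-moments. For $p \geq 1$, the idea is to combine this with the boundary Scott--Zhang operator $J_\faces^p\colon L^2(\Gamma)\to P^p(\faces)\cap H^1(\Gamma)$: decomposing $w-g = J_\faces^p(w-g) + (1-J_\faces^p)(w-g)$, the Fortin orthogonality annihilates the first contribution, while the remaining one will be controlled through $H^{1/2}(\Gamma)\times H^{-1/2}(\Gamma)$ duality, the $\Hdivset\Omega$-stability of $\PiDivRTtilde{p}$, and Lemma~\ref{lem:tracedualities}:
\[
  |\dual{(1-J_\faces^p)(w-g)}{(1-\PiDivRTtilde{p})\btau\cdot\normal}_\Gamma| \lesssim \|(1-J_\faces^p)(w-g)\|_{1/2,\Gamma}\,\|\btau\|_{\Hdivset\Omega}.
\]
The Scott--Zhang approximation property from Section~\ref{sec:notation}, combined with~\eqref{eq:propszbou}, then bounds the first factor by $\|h_\faces^{1/2}(1-\pi_\faces^{p-1})\nabla_\Gamma(w-g)\|_\Gamma = \rho_\Gamma(w)$.

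The case $p=0$ calls for a separate, purely facet-wise treatment, since $P^0(\faces)\cap H^1(\Gamma)$ contains only the constants and the trick above collapses. Here, I would use that $(1-\PiDivRTtilde{0})\btau\cdot\normal$ has zero mean on each boundary facet to subtract $\pi_\faces^0(w-g)$ from $w-g$ inside the pairing, apply facet-wise Cauchy--Schwarz with weights $h_F^{\pm1/2}$, and then bound $\|h_\faces^{1/2}(1-\pi_\faces^0)(w-g)\|_\Gamma \lesssim \diam(\Omega)\|h_\faces^{1/2}\nabla_\Gamma(w-g)\|_\Gamma = \diam(\Omega)\,\rho_\Gamma(w)$ via the facet Poincar\'e inequality. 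The matching $\btau$-side estimate $\|h_\faces^{-1/2}(1-\PiDivRTtilde{0})\btau\cdot\normal\|_\Gamma \lesssim \|\btau\|_{\Hdivset\Omega}$ is the step I expect to be the main obstacle: it is a scaled $L^2$ trace inequality for the Fortin error on the boundary skeleton that must be extracted from the local stability and divergence preservation of $\PiDivRTtilde{p}$ in~\cite{MR4410735}.
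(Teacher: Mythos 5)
Your proof takes a genuinely different route from the paper's, but it has a gap at the crucial step, namely the claimed facet-level Fortin orthogonality of $\PiDivRTtilde{p}$.

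You propose to use $\Pi_h = \PiDivRTtilde{p}$ directly and to annihilate the contribution of $J_\faces^p(w-g)$ by the property that $(1-\PiDivRTtilde{p})\btau\cdot\normal$ has vanishing $P^p(F)$-moments on each boundary facet $F$. This is problematic in two ways. First, for a general $\btau\in\Hdivset\Omega$ the normal trace $\btau\cdot\normal$ is only in $H^{-1/2}(\Gamma)$, so the facet-wise pairing $\dual{q_p}{\btau\cdot\normal}_F$ against a $P^p(F)$ function (extended by zero to the rest of $\Gamma$, hence discontinuous) is ill-defined; the facet moment orthogonality is simply not a meaningful statement at this level of regularity. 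Second, the paper only records the $\Hdivset\Omega$-boundedness of the operator from~\cite[Theorem~3.1]{MR4410735}; the Fortin-type moment property you invoke is not among its advertised features, and this operator is not of the Ern--Guermond/DPG Fortin type that is built precisely to satisfy such facet orthogonality.

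The paper's proof circumvents both issues by \emph{not} using $\PiDivRTtilde{p}$ on its own. Instead it constructs a modified operator $\Pi_{\widetilde\mesh}\btau := \PiDivRTtilde{p}\widetilde\btau$, where $\widetilde\btau\in\Hdivset\Omega$ is a (harmonic) lifting of $(J_\faces^p)'\trdiv\Omega\btau$, so that, as an operator identity, $\trdiv\Omega\Pi_{\widetilde\mesh} = (J_\faces^p)'\trdiv\Omega$. The pivot is that $(J_\faces^p)'$ is a \emph{globally} defined, $H^{-1/2}(\Gamma)$-bounded operator (the dual of the $H^{1/2}$-stable Scott--Zhang map $J_\faces^p$), so
\begin{align*}
\dual{w-g}{\trdiv\Omega(\btau-\Pi_{\widetilde\mesh}\btau)}_\Gamma
  = \dual{(1-J_\faces^p)(w-g)}{\trdiv\Omega\btau}_\Gamma
\end{align*}
holds as a well-posed $H^{1/2}\times H^{-1/2}$ pairing, and the rest of your duality argument does carry over verbatim. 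In other words, the paper's orthogonality is at the level of the boundary Scott--Zhang operator, not the $\Hdivset\Omega$ interpolant, and this is precisely what makes it legal. For $p=0$ the paper replaces $J_\faces^p$ by $J_\faces^1\pi_\faces^0$ (bounded in $H^{1/2}$ with dual range in $P^0(\faces)$), rather than the facet-wise Poincar\'e argument you sketch; your diagnosis that the $\btau$-side weighted trace estimate $\|h_\faces^{-1/2}(1-\PiDivRTtilde{0})\btau\cdot\normal\|_\Gamma\lesssim\|\btau\|_{\Hdivset\Omega}$ is the main obstacle is correct, but in fact that estimate fails for general $\btau$ since the left side need not even be finite, and the same loss of well-definedness is already present in your $p>0$ argument.
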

\begin{proof}
  In view of the previously established abstract results, we define a bounded linear operator $\Pi_\mesh\colon V_\Gamma \to V_{\Gamma,\mesh}$.
  First, we consider $p>0$. 
  Let $\btau\in\Hdivset\Omega$ be given. Note that $J_{\faces}^p$ is bounded in $H^{1/2}(\Gamma)$ and its dual $(J_{\faces}^p)'$ is therefore bounded in $H^{-1/2}(\Gamma)$. One also verifies that the range of the dual operator is in $P^p(\faces)$.
  Let $\btau\in \Hdivset\Omega$ be given and define $\widetilde\btau\in\Hdivset\Omega$ as the unique solution to
  \begin{align*}
    -\nabla\div\widetilde\btau + \widetilde\btau &= 0, \quad \trdiv\Omega\widetilde\btau = (J_{\faces}^p)'\trdiv\Omega\btau.
  \end{align*}
  Set $\Pi_{\widetilde\mesh} \btau:= \PiDivRTtilde{p}\widetilde\btau$. Then,
  \begin{align*}
    \|\Pi_{\widetilde\mesh} \btau\|_{\Hdivset\Omega} &\lesssim \|\widetilde\btau\|_{\Hdivset\Omega} \lesssim \|(J_{\faces}^p)'\trdiv\Omega\btau\|_{H^{-1/2}(\Gamma)} \lesssim \|\trdiv\Omega\btau\|_{H^{-1/2}(\Gamma)} \lesssim \|\btau\|_{\Hdivset\Omega} 
  \end{align*}
  which means that $\Pi_{\widetilde\mesh}$ is a bounded operator. 
  Further, 
  \begin{align*}
    |\dual{w-g}{\trdiv\Omega(\btau-\Pi_{\widetilde\mesh}\btau)}_\Gamma| &= |\dual{w-g}{(1-(J_{\faces}^p)')\trdiv\Omega\btau}_\Gamma|
    = |\dual{(1-J_{\faces}^p)(w-g)}{\trdiv\Omega\btau}_\Gamma|\\
    &\lesssim \|(1-J_{\faces}^p)(w-g)\|_{1/2,\Gamma}\|\trdiv\Omega\btau\|_{-1/2,\Gamma} \\
    &\lesssim \|h_{\faces}^{1/2}\nabla_\Gamma(1-J_{\faces}^p)(w-g)\|_\Gamma \|\btau\|_{\Hdivset\Omega}.
  \end{align*}
  The last estimates and~\eqref{eq:propszbou} show that
  \begin{align*}
  \sup_{0\neq \btau\in\Hdivset\Omega} \frac{\dual{w-g}{\trdiv\Omega(\btau-\Pi_{\widetilde\mesh}\btau)}_\Gamma}{\|\btau\|_{\Hdivset\Omega}}
  &\lesssim \|h_{\faces}^{1/2}\nabla_\Gamma(1-J_{\faces}^p)(w-g)\|_\Gamma
    \\
    &\eqsim \|h_\faces^{1/2}(1-\pi_\faces^{p-1})\nabla_\Gamma(w-g)\|_\Gamma
    = \rho_\Gamma(w),
  \end{align*}
  which completes the proof for $p>0$. The case $p=0$ is slightly more technical, but follows similar arguments.
  Instead of the operator $J_{\faces}^p$, we consider $J_{\faces}^1\pi_{\faces}^0$. We stress that one can show that this operator is bounded in $H^{1/2}(\Gamma)$, and its dual has range in $P^0(\faces)$. Moreover,
  \begin{align*}
    \|(1-J_{\faces}^1\pi_{\faces}^0)(w-g)\|_{H^{1/2}(\Gamma)} \lesssim \|h_{\faces}^{1/2}\nabla_\Gamma(w-g)\|_\Gamma.
  \end{align*}
  Following the same arguments as in the case $p>0$ concludes the proof.
\end{proof}

\begin{remark}
  The efficient computation of $\eta_\Gamma(w)$ requires a spectrally equivalent preconditioner (Section~\ref{sec:computelowerbound}). 
  Efficient preconditioners for $\Hdivset\Omega$ are known in the literature (see, e.g.,~\cite{Hiptmair97}).
\end{remark}

\subsubsection{Second option}\label{sec:boucond:opt2}
For the second alternative we set $V_\Gamma = H^{-1/2}(\Gamma)$ and recall from Lemma~\ref{lem:tracedualities} that
\begin{align*}
  \|\cdot\|_{1/2,\Gamma} = \sup_{0\neq \mu\in  H^{-1/2}(\Gamma)} \frac{\dual{\cdot}\mu_\Gamma}{\|\mu\|_{-1/2,\Gamma}}.
\end{align*}
We choose $V_{\Gamma,\faces} = P^p(\faces)$ and set for all $w\in H^1(\Omega)$ with $w|_\Gamma\in H^1(\Gamma)$
\begin{subequations}\label{eq:defetarhobou2}
\begin{align}
  \eta_\Gamma(w) &= \sup_{0\neq \btau\in V_{\Gamma,\faces}} \frac{\dual{w-g}\mu_\Gamma}{\|\mu\|_{-1/2,\Gamma}}, \\
  \rho_\Gamma(w) &= \begin{cases}
    \|h_{\faces}^{1/2}\nabla_\Gamma(w-g)\|_\Gamma & p=0, \\
    \|h_{\faces}^{1/2}(1-\pi_\faces^{p-1})\nabla_\Gamma(w-g)\|_\Gamma & p>0.
  \end{cases}
\end{align}
\end{subequations}
Note that the definitions~\eqref{eq:defetarhobou2} are similar to~\eqref{eq:defetarhobou1}. The difference is that since we only need to consider discretizations of spaces defined over the manifold $\Gamma$ instead of $\Omega$, though at the price of dealing with a fractional norm.

\begin{lemma}\label{lem:bouest2}
  Suppose that $\eta_\Gamma$ and $\rho_\Gamma$ are defined as in~\eqref{eq:defetarhobou2}. Then, inequalities~\eqref{eq:bouequiv} are satisfied for all $g\in H^1(\Gamma)$ and $w\in H^1(\Omega)$ with $w|_\Gamma\in H^1(\Gamma)$.
\end{lemma}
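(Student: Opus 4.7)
The plan is to apply Theorem~\ref{thm:equivest} (equivalently Corollary~\ref{cor:equivest}) with $V=V_\Gamma=H^{-1/2}(\Gamma)$, $V_h=V_{\Gamma,\faces}=P^p(\faces)$, and bilinear form $b_\Gamma(w,\mu) = \dual{w}{\mu}_\Gamma$, so that the residual is $\dual{w-g}{\cdot}_\Gamma$. The lower bound in~\eqref{eq:bouequiv} is immediate from $V_{\Gamma,\faces}\subseteq V_\Gamma$ together with Lemma~\ref{lem:tracedualities}, so the task reduces to building a bounded linear operator $\Pi_\faces\colon H^{-1/2}(\Gamma)\to P^p(\faces)$ whose associated $\mu$-quantity from~\eqref{eq:defmu} is controlled by $\rho_\Gamma(w)$.

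For $p>0$, I would take $\Pi_\faces := (J_\faces^{p})'$, the $L^2(\Gamma)$-dual of the Scott–Zhang operator from Section~\ref{sec:notation}. Since $J_\faces^{p}$ is a bounded mapping on $H^{1/2}(\Gamma)$, its dual is bounded on $H^{-1/2}(\Gamma)$; moreover, the range of $\Pi_\faces$ is contained in $P^p(\faces)$ (as the annihilator of the orthogonal complement of $\ran(J_\faces^p)$ lies in that polynomial space). A direct duality computation then yields
\begin{align*}
  |\dual{w-g}{\mu-\Pi_\faces\mu}_\Gamma|
  &= |\dual{(1-J_\faces^{p})(w-g)}{\mu}_\Gamma| \\
  &\leq \|(1-J_\faces^{p})(w-g)\|_{1/2,\Gamma}\,\|\mu\|_{-1/2,\Gamma} \\
  &\lesssim \|h_\faces^{1/2}\nabla_\Gamma(1-J_\faces^{p})(w-g)\|_\Gamma\,\|\mu\|_{-1/2,\Gamma},
\end{align*}
using the approximation estimate for $J_\faces^{p}$ recalled in Section~\ref{sec:notation}. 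Invoking the equivalence~\eqref{eq:propszbou} identifies the last factor (up to a constant) with $\rho_\Gamma(w)$. Dividing by $\|\mu\|_{-1/2,\Gamma}$ and taking the supremum over $\mu\in H^{-1/2}(\Gamma)\setminus\{0\}$ produces the desired bound on $\mu(w)$, and Theorem~\ref{thm:equivest} then delivers the upper estimate in~\eqref{eq:bouequiv}.

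For $p=0$ I would use the workaround already employed in the proof of Lemma~\ref{lem:bouest1}: replace $J_\faces^{p}$ by $J_\faces^{1}\pi_\faces^{0}$, whose range still lies in $P^0(\faces)$, which is $H^{1/2}(\Gamma)$-bounded, and which obeys $\|(1-J_\faces^{1}\pi_\faces^{0})(w-g)\|_{1/2,\Gamma}\lesssim \|h_\faces^{1/2}\nabla_\Gamma(w-g)\|_\Gamma$. The rest of the argument is verbatim, and gives precisely the $p=0$ branch of $\rho_\Gamma$.

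The main (mild) obstacle is purely technical bookkeeping: one must verify that the $L^2$-dual of the Scott–Zhang operator indeed maps $H^{-1/2}(\Gamma)$ boundedly into $P^p(\faces)$ with the correct duality pairing against the $H^{1/2}$-$H^{-1/2}$ extension of the $L^2$ product. Compared with Lemma~\ref{lem:bouest1}, the present argument is actually shorter, because the detour through $\Hdivset\Omega$ and the Raviart–Thomas projection $\PiDivRTtilde{p}$ is unnecessary—one works directly on $\Gamma$ at the price of handling a fractional-order norm. No new deep ingredient is required beyond what Section~\ref{sec:notation} already provides.
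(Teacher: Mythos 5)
Your argument is correct and essentially the paper's own: the paper's proof of this lemma simply takes the normal trace of the operator $\Pi_{\widetilde\mesh}$ built in Lemma~\ref{lem:bouest1} and repeats the same computation, which amounts to exactly the $(J_\faces^{p})'$ (resp.\ $(J_\faces^{1}\pi_\faces^{0})'$) construction you use, only packaged through $\Hdivset\Omega$. Your direct definition of the projector on $\Gamma$ is a harmless streamlining; all substantive ingredients---boundedness of the dual Scott--Zhang operator on $H^{-1/2}(\Gamma)$ with range in $P^p(\faces)$, the duality swap, and the $H^{1/2}$ approximation estimate combined with~\eqref{eq:propszbou}---coincide with those already invoked in the proof of Lemma~\ref{lem:bouest1}.
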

\begin{proof}
  In the proof of Lemma~\ref{lem:bouest1} the operator $\Pi_{\widetilde\mesh}$ is constructed. By taking the normal trace of this operator to define $\Pi_{\Gamma,\faces}\colon V_\Gamma\to V_{\Gamma,\faces}$, we can follow the same lines of arguments.
  We omit further details. 
\end{proof}

The evaluation of $\eta_\Gamma(w)$ requires the evaluation of the $H^{-1/2}(\Gamma)$ inner product on $V_{\Gamma,\faces} = P^p(\faces)$. 
We note the following equivalence (see, e.g.,~\cite[p.215]{MultilevelNorms}). 
\begin{lemma}
  \begin{align*}
  \|\phi\|_{-1/2,\Gamma} \eqsim \|\pi_\faces^0\phi\|_{-1/2,\Gamma} + \|h_\faces^{1/2}(1-\pi_\faces^0)\phi\|_\Gamma 
  \quad\forall \phi \in P^p(\faces).
\end{align*}
\end{lemma}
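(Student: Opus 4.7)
The plan is to establish the two inequalities separately, exploiting the decomposition $\phi = \pi_\faces^0\phi + (1-\pi_\faces^0)\phi$ and the fact that the second summand has vanishing mean on every face $F\in\faces$. Throughout, I would work from the duality definition of $\|\cdot\|_{-1/2,\Gamma}$ and combine standard $L^2$-approximation properties of $\pi_\faces^0$ with an inverse-type estimate for discrete functions with zero face averages.

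For the upper bound, I would apply the triangle inequality to the splitting; the first summand is trivial. For the second summand, the vanishing face averages allow writing, for any $v\in H^{1/2}(\Gamma)$,
\[
\dual{(1-\pi_\faces^0)\phi}{v}_\Gamma = \dual{(1-\pi_\faces^0)\phi}{v-\pi_\faces^0 v}_\Gamma.
\]
A weighted Cauchy--Schwarz estimate combined with the routine approximation bound $\|h_\faces^{-1/2}(v-\pi_\faces^0 v)\|_\Gamma \lesssim \|v\|_{1/2,\Gamma}$ (obtained, e.g., by comparing $\pi_\faces^0$ with $J_\faces^{p+1}$ via its $H^{1/2}$-stability and an inverse estimate on the discrete difference) then yields $\|(1-\pi_\faces^0)\phi\|_{-1/2,\Gamma} \lesssim \|h_\faces^{1/2}(1-\pi_\faces^0)\phi\|_\Gamma$, which is the desired upper direction.

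The main obstacle is the lower bound, which I would reduce to the discrete inverse-type estimate
\[
\|h_\faces^{1/2}\psi\|_\Gamma \lesssim \|\psi\|_{-1/2,\Gamma} \quad\text{for all } \psi\in P^p(\faces) \text{ with } \pi_\faces^0\psi=0,
\]
applied to $\psi=(1-\pi_\faces^0)\phi$. The strategy is localization plus scaling: on each face $F$, $\psi|_F$ is a polynomial of bounded degree with vanishing mean, so its extension by zero to $\Gamma$ is $H^{-1/2}(\Gamma)$-compatible (the zero-mean condition is exactly what guarantees that the jump at $\partial F$ costs nothing at the $H^{-1/2}$-scale). Pulling back to a reference face reduces the inequality to a norm equivalence on the finite-dimensional space of mean-free polynomials of degree $\leq p$, and tracking the powers of $h_F$ under the affine scaling produces the weight $h_\faces^{1/2}$. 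A coloring or quasi-orthogonality argument then reassembles the local face-wise estimates into the global $H^{-1/2}(\Gamma)$-bound; this global-to-local passage is where the analysis is most delicate and is, in my view, the technical heart of the proof.

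Granted this inverse estimate, the estimate on $\|\pi_\faces^0\phi\|_{-1/2,\Gamma}$ is immediate from the triangle inequality $\|\pi_\faces^0\phi\|_{-1/2,\Gamma}\leq \|\phi\|_{-1/2,\Gamma}+\|(1-\pi_\faces^0)\phi\|_{-1/2,\Gamma}$, bounding the last term by $\|h_\faces^{1/2}(1-\pi_\faces^0)\phi\|_\Gamma$ (upper-bound direction) and in turn by $\|\phi\|_{-1/2,\Gamma}$ (inverse estimate). Combining both directions yields the claimed equivalence.
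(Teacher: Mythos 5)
The paper does not prove this lemma; it cites it as a known equivalence (with reference to the literature on multilevel norms), so your sketch is a genuinely original attempt. Your upper-bound argument is correct and is the standard one.

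For the lower bound, however, there is a circularity. Your inverse estimate, applied to $\psi=(1-\pi_\faces^0)\phi$, yields
\begin{align*}
\|h_\faces^{1/2}(1-\pi_\faces^0)\phi\|_\Gamma \lesssim \|(1-\pi_\faces^0)\phi\|_{-1/2,\Gamma},
\end{align*}
whereas the claimed lower bound requires control by $\|\phi\|_{-1/2,\Gamma}$. In the last paragraph you write that the inverse estimate bounds $\|h_\faces^{1/2}(1-\pi_\faces^0)\phi\|_\Gamma$ by $\|\phi\|_{-1/2,\Gamma}$, but this step is not what the stated estimate gives; it would require knowing a priori that $\pi_\faces^0$ is stable in $H^{-1/2}(\Gamma)$ on the discrete space, which is exactly (one half of) the conclusion you are trying to reach. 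The fix is simple: prove the inverse estimate $\|h_\faces^{1/2}\psi\|_\Gamma\lesssim\|\psi\|_{-1/2,\Gamma}$ for \emph{all} $\psi\in P^p(\faces)$ (no mean-free hypothesis is needed for this -- see below), apply it to $\phi$ itself, and then observe $\|h_\faces^{1/2}(1-\pi_\faces^0)\phi\|_\Gamma\leq\|h_\faces^{1/2}\phi\|_\Gamma$ because $h_\faces$ is face-wise constant and $1-\pi_\faces^0$ is an $L^2(\Gamma)$-orthogonal projection on each face. From there the triangle inequality gives $\|\pi_\faces^0\phi\|_{-1/2,\Gamma}\lesssim\|\phi\|_{-1/2,\Gamma}$ as in your sketch.

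On a secondary point: the zero-face-average hypothesis you impose in the inverse estimate is unnecessary, and the heuristic you give for it (``the jump at $\partial F$ costs nothing at the $H^{-1/2}$-scale because of the zero mean'') is not the right intuition. The extension-by-zero of a polynomial from a face $F$ to $\Gamma$ is always admissible as an element of $H^{-1/2}(\Gamma)\supset L^2(\Gamma)$; jumps are only an obstruction on the primal $H^{1/2}$ side. In the localization-and-scaling proof of the inverse estimate, the test function you build (a bubble times $h_F\phi|_F$, supported in the interior of $F$) already has the needed $H^{1/2}$-regularity; zero mean of $\phi|_F$ plays no role. The delicate step you correctly flag is summing the face-local test functions into a global $v\in H^{1/2}(\Gamma)$ with a super-additive bound on $\|v\|_{1/2,\Gamma}^2$, which needs either a coloring argument or the elementary estimate $\|v\|_{1/2,\Gamma}^2\lesssim\sum_F\big(|v_F|_{H^{1/2}(F)}^2+h_F^{-1}\|v_F\|_F^2\big)$ for disjointly supported pieces. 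With the circularity repaired and the mean-free hypothesis removed, the proof goes through.
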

Based on the aforementioned equivalence, it suffices to focus on the fast evaluation of $ \|\pi_\faces^0 \phi\|_{-1/2, \Gamma}$ to compute $\eta_\Gamma$ efficiently.  
Optimal preconditioners for piecewise constant functions in negative Sobolev spaces include multilevel preconditioners (see, e.g.,~\cite{MultilevelNorms,STBook21}), as well as operator preconditioners (see, e.g.,~\cite{MR4044445}).
\subsubsection{Third option}\label{sec:boucond:opt3} 
As a third alternative, instead of working with definitions given by dual norms, we may directly work with the interpolation norm on trace space $H^{1/2}(\Gamma)$. 
Define for $p\in\N_0$
\begin{subequations}\label{eq:defetarhobou3}
\begin{align}
  \eta_\Gamma(w) &= \|J_\faces^{p+1}(w-g)\|_{1/2,\Gamma} \\
  \rho_\Gamma(w) &= \|h_\faces^{1/2}(1-\pi_\faces^p)\nabla_\Gamma(w-g)\|_\Gamma
\end{align}
\end{subequations}
for all $w\in H^1(\Omega)$ with $w|_\Gamma\in H^1(\Gamma)$.

\begin{lemma}\label{lem:bouest3}
  Suppose that $\eta_\Gamma$ and $\rho_\Gamma$ are defined as in~\eqref{eq:defetarhobou3}. Then,
  inequalities~\eqref{eq:bouequiv} are satisfied
  for all $g\in H^1(\Gamma)$ and $w\in H^1(\Omega)$, with $w|_\Gamma\in H^1(\Gamma)$.
\end{lemma}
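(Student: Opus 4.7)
The plan is to follow the same two-step structure as in Lemmas~\ref{lem:bouest1} and~\ref{lem:bouest2}, but this time work directly with the trace norm on $H^{1/2}(\Gamma)$ instead of passing through a dual formulation. Writing $v := (w-g)|_\Gamma \in H^1(\Gamma)$, the task reduces to showing
\[
  \|J_\faces^{p+1} v\|_{1/2,\Gamma} \lesssim \|v\|_{1/2,\Gamma} \lesssim \|J_\faces^{p+1} v\|_{1/2,\Gamma} + \|h_\faces^{1/2}(1-\pi_\faces^p)\nabla_\Gamma v\|_\Gamma.
\]
The lower bound is immediate from the $H^{1/2}(\Gamma)$-stability of the Scott--Zhang-type operator $J_\faces^{p+1}$ stated in Section~\ref{sec:notation}.

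For the upper bound I would first apply the triangle inequality
\[
  \|v\|_{1/2,\Gamma} \leq \|J_\faces^{p+1} v\|_{1/2,\Gamma} + \|(1-J_\faces^{p+1}) v\|_{1/2,\Gamma},
\]
identifying the first term with $\eta_\Gamma(w)$, and then bound the remainder. The key idea is to exploit that $J_\faces^{p+1}$ is a projection: setting $\tilde v := (1-J_\faces^{p+1}) v$, idempotency yields $J_\faces^{p+1}\tilde v = 0$, so $\tilde v = (1 - J_\faces^{p+1})\tilde v$. The approximation estimate for $J_\faces^{p+1}$ recalled in Section~\ref{sec:notation} then gives
\[
  \|\tilde v\|_{1/2,\Gamma} \eqsim \|(1-J_\faces^{p+1})\tilde v\|_{H^{1/2}(\Gamma)} \lesssim \|h_\faces^{1/2}\nabla_\Gamma \tilde v\|_\Gamma = \|h_\faces^{1/2}\nabla_\Gamma(1-J_\faces^{p+1}) v\|_\Gamma,
\]
and finally the equivalence~\eqref{eq:propszbou} rewrites the right-hand side as $\|h_\faces^{1/2}(1-\pi_\faces^p)\nabla_\Gamma v\|_\Gamma = \rho_\Gamma(w)$.

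The only nontrivial step is the middle one. Applying the Scott--Zhang approximation estimate directly to $v$ would produce the non-sharp bound $\|h_\faces^{1/2}\nabla_\Gamma v\|_\Gamma$, which fails to recover the quasi-orthogonal projection $1-\pi_\faces^p$ appearing in $\rho_\Gamma$. The trick is to apply that same estimate to $\tilde v$ and use idempotency of $J_\faces^{p+1}$ to reintroduce the operator $1-J_\faces^{p+1}$ inside the gradient, at which point~\eqref{eq:propszbou} can be invoked. I expect this bootstrap to be the only step requiring care; all other manipulations are routine triangle-inequality and stability arguments already present in the preceding two lemmas.
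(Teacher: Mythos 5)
Your proof is correct and is essentially the fully spelled-out version of the paper's one-line argument ("follows from the triangle inequality and properties of the operator $J_\faces^{p+1}$"): triangle inequality to split off $\eta_\Gamma$, $H^{1/2}(\Gamma)$-stability for the lower bound, and the approximation estimate plus~\eqref{eq:propszbou} for the remainder. The one care point you flag—that naively applying the approximation estimate to $v$ gives $\|h_\faces^{1/2}\nabla_\Gamma v\|_\Gamma$ rather than the sharper $\rho_\Gamma$, and that the idempotency of the Scott--Zhang-type projection $J_\faces^{p+1}$ is needed to reinsert $1-J_\faces^{p+1}$ inside the gradient before invoking~\eqref{eq:propszbou}—is exactly the implicit ingredient the paper's terse proof relies on, and your bootstrap makes it explicit and correct.
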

\begin{proof}
  This simply follows from the triangle inequality and properties of the operator $J_\faces^{p+1}$.
\end{proof}

For the fast evaluation of $\eta_\Gamma$, we note that efficient preconditioners in $H^{1/2}(\Gamma)$ are well established (see, e.g.,~\cite{STBook21,MR4094780}).
\begin{remark}\label{rem:investbou}
  Using an inverse estimate and boundedness of $J_\faces^{p+1}$ one gets that
  \begin{align*}
    \eta_\Gamma(w) &= \|J_\faces^{p+1}(w-g)\|_{1/2,\Gamma} \lesssim \|h_\faces^{-1/2}J_\faces^{p+1}(w-g)\|_\Gamma
    \lesssim \|h_\faces^{-1/2}(w-g)\|_\Gamma.
  \end{align*}
  Replacing $\eta_\Gamma$ by the weighted $L^2(\Gamma)$ norm $\|h_\faces^{-1/2}(w-g)\|_\Gamma$ is a common technique in least-squares (based) finite element methods~\cite{BG09} to avoid dealing with non-local fractional norms.
  We emphasize that the weighted norm is not equivalent to $\eta_\Gamma(w)$. 
  However, it can be used as an upper bound in combination with $\rho_\Gamma(w)$ since
  \begin{align*}
    \|w-g\|_{1/2,\Gamma} \lesssim \eta_\Gamma(w) + \rho_\Gamma(w) \lesssim \|h_\faces^{-1/2}(w-g)\|_\Gamma +\|h_\faces^{1/2}(1-\pi_\faces^p)\nabla_\Gamma(w-g)\|_\Gamma.
  \end{align*}
\end{remark}

\subsection{Weak formulation}\label{sec:weakform}
We consider the weak formulation~\eqref{eq:weakform} of the model problem~\eqref{eq:modelPDE}. Define $U = H^1(\Omega)$, $V_\Omega = H_0^1(\Omega)$. Define the operator $B_\Omega \colon U \to V_\Omega'$ to represent the left-hand side of~\eqref{eq:weakform} and let $F_\Omega\in V_\Omega'$ be the right-hand side, i.e., 
\begin{align*}
  (B_\Omega w)(v) &:= b_\Omega(w,v) := \ip{\bA\nabla u}{\nabla v}_\Omega + \ip{\bbeta\cdot\nabla u+c u}{v}_\Omega, \\
  F_\Omega(v) &:= \ip{f}v_\Omega
\end{align*}
for all $w\in U$, $v\in V_\Omega$.
From well-posedness of the weak formulation~\eqref{eq:weakform} it follows that
\begin{align*}
  \|u-w\|_U \eqsim \|F_\Omega-B_\Omega w\|_{V_\Omega'} + \|F_\Gamma-B_\Gamma w\|_{V_\Gamma'}, 
\end{align*}
with $F_\Gamma$, $B_\Gamma$ and $V_\Gamma$ as discussed in Section~\ref{sec:boucond} above.
We consider two choices of discrete test spaces.
\subsubsection{Lagrange FEM test space}\label{sec:lagrangefem}
We choose $V_{\Omega,\mesh} = P^{k+1}(\mesh)\cap H_0^1(\Omega)$ and set, for all $w\in H^1(\Omega)$ such that $\div\bA\nabla w|_T\in L^2(T)$ for all $T\in\mesh$, 
\begin{subequations}\label{eq:defWeakFormEstimators}
\begin{align}
  \eta_{\Omega}(w) &= \sup_{0\neq v\in V_{\Omega,\mesh}} \frac{\ip{f}{v}_\Omega-b_\Omega(w,v)}{\|\nabla v\|_{\Omega}},
  \\
  \rho_{\Omega}(w)^2 &= \sum_{T\in\mesh}h_T^2\|f+\div\bA\nabla w-\bbeta\cdot\nabla w-cw\|_T^2
  + h_T\|\jump{\bA\nabla w\normal}\|_{\partial T\setminus\Gamma}^2, \\
  \eta(w)^2 &= \eta_\Omega(w)^2 + \eta_\Gamma(w)^2, \\
  \rho(w)^2 &= \rho_\Omega(w)^2 + \rho_\Gamma(w)^2.
\end{align}
\end{subequations}
Here, $(\eta_\Gamma$, $\rho_\Gamma)$ denote any of the options described in Section~\ref{sec:boucond}.

\begin{theorem}\label{thm:lagrangeFEM}
  Under the aforegoing definitions~\eqref{eq:defWeakFormEstimators} and assumptions, the estimates
  \begin{align*}
    \eta(w) \lesssim \|u-w\|_{H^1(\Omega)} \lesssim \eta(w) + \rho(w).
  \end{align*}
  hold for all $w\in H^1(\Omega)$, with $\div\bA\nabla w|_T\in L^2(T)$, $T\in\mesh$.
\end{theorem}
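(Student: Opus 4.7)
The plan is to apply Corollary~\ref{cor:equivest} with the product structure $V = V_\Omega \times V_\Gamma$ and $B = B_\Omega + B_\Gamma$ already introduced in Section~\ref{sec:modelpoisson}. First I would invoke well-posedness of the weak formulation~\eqref{eq:weakform}, which under the given assumptions on $\bA$, $\bbeta$, $c$ yields that the combined operator $B\colon H^1(\Omega) \to V_\Omega' \times V_\Gamma'$ is bounded below. The lower bound $\eta(w) \lesssim \|u-w\|_{H^1(\Omega)}$ is then immediate from the first inequality in Corollary~\ref{cor:equivest} together with boundedness of $B$. For the upper bound it suffices to show $\mu_\Omega(w) \lesssim \rho_\Omega(w)$, since the analogous boundary estimate $\mu_\Gamma(w) \lesssim \rho_\Gamma(w)$ is furnished by one of Lemmas~\ref{lem:bouest1}--\ref{lem:bouest3}, depending on which variant of $(\eta_\Gamma,\rho_\Gamma)$ from Section~\ref{sec:boucond} is chosen.

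To handle $\mu_\Omega$, I take as the operator $\Pi_h$ in~\eqref{eq:defmu} the Scott--Zhang type operator $J_{\mesh,0}^{k+1}\colon L^2(\Omega)\to P^{k+1}(\mesh)\cap H_0^1(\Omega) = V_{\Omega,\mesh}$ recalled in Section~\ref{sec:notation}; this operator is $H_0^1$-bounded and enjoys the elementwise first-order approximation property as well as the trace estimate~\eqref{eq:sztraceest}. Fixing $w$ with $\div\bA\nabla w|_T \in L^2(T)$ and $v \in H_0^1(\Omega)$, I would carry out elementwise integration by parts in $b_\Omega(u-w,v-J_{\mesh,0}^{k+1}v) = \ip{f}{v-J_{\mesh,0}^{k+1}v}_\Omega - b_\Omega(w,v-J_{\mesh,0}^{k+1}v)$, producing the volume residual $f + \div\bA\nabla w - \bbeta\cdot\nabla w - cw$ on each $T\in\mesh$ and the jump term $\jump{\bA\nabla w\,\normal}$ on each interior facet (boundary facet contributions vanish because $v - J_{\mesh,0}^{k+1}v \in H_0^1(\Omega)$). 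Cauchy--Schwarz together with the Scott--Zhang estimates
\[
\|v - J_{\mesh,0}^{k+1}v\|_T \lesssim h_T \|\nabla v\|_{\Omega(T)}, \qquad \|v - J_{\mesh,0}^{k+1}v\|_F \lesssim h_T^{1/2} \|\nabla v\|_{\Omega(T)}
\]
yields, after summing over elements and facets and using the finite overlap of the patches $\Omega(T)$, the bound $\mu_\Omega(w) \lesssim \rho_\Omega(w) \|\nabla v\|_\Omega$, hence $\mu_\Omega(w) \lesssim \rho_\Omega(w)$.

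Combining the volume and boundary estimates through Corollary~\ref{cor:equivest} then gives $\|u-w\|_{H^1(\Omega)} \lesssim \eta(w) + \mu(w) \lesssim \eta(w) + \rho(w)$, completing the argument. I do not expect a serious obstacle: the framework of Section~\ref{sec:abstract} already reduces the theorem to the two localization estimates, and the one for $\mu_\Omega$ is essentially a textbook residual a posteriori calculation. The only point that requires mild care is that $w$ need not satisfy the boundary condition, so one must ensure that the volume and boundary residuals are cleanly decoupled — this is precisely what Corollary~\ref{cor:equivest} and the weak enforcement of the Dirichlet datum through $\eta_\Gamma,\rho_\Gamma$ provide.
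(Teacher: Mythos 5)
Your proof is correct and follows essentially the same route as the paper: delegate the boundary residual to the lemmas of Section~\ref{sec:boucond}, take $\Pi_\mesh=J_{\mesh,0}^{k+1}$ as the bounded operator into $V_{\Omega,\mesh}$, integrate by parts elementwise to expose the volume residual and interior jump terms, and estimate via Cauchy--Schwarz and the Scott--Zhang approximation bounds. The only cosmetic slip is the intermediate display ``$\mu_\Omega(w)\lesssim\rho_\Omega(w)\|\nabla v\|_\Omega$'', which should read as a bound on the numerator $|b_\Omega(u-w,v-J_{\mesh,0}^{k+1}v)|$ before dividing by $\|\nabla v\|_\Omega$ and taking the supremum.
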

\begin{proof}
  We only need to show that
  \begin{align}
    \eta_\Omega(w) \lesssim \|F_\Omega - B_\Omega w\|_{V_\Omega'} \lesssim \eta_\Omega(w) + \rho_\Omega(w).
  \end{align}
  First observe that $\Pi_\mesh:= J_{\mesh,0}^{k+1}$ is bounded in $V_\Omega = H_0^1(\Omega)$. Therefore, by the considerations from Section~\ref{sec:abstract}, the lower bound follows. 
  For the upper bound, we set $\widetilde v = v-J_{\mesh,0}^{k+1}v$ and use integration by parts to obtain
  \begin{align*}
    |F_\Omega(\widetilde v)-b_\Omega(w,\widetilde v)| &= \left\lvert\sum_{T\in\mesh}\ip{f+\div\bA\nabla w-\bbeta\cdot\nabla w -cw}{\widetilde v}_T
    -\dual{\bA\nabla w\normal}{\widetilde v}_{\partial T}\right\rvert.
  \end{align*}
  Then, using the (local) approximation properties of the quasi-interpolator and the Cauchy--Schwarz estimate we find that
  \begin{align*}
    &\sum_{T\in\mesh}|\ip{f+\div\bA\nabla w-\bbeta\cdot\nabla w -cw}{\widetilde v}_T| \\&\qquad\lesssim 
    \left(\sum_{T\in\mesh}h_T^2\|f+\div\bA\nabla w-\bbeta\cdot\nabla w -cw\|_T^2\right)^{1/2}\|v\|_{H^1(\Omega)}.
  \end{align*}
  Finally, similar arguments prove that
  \begin{align*}
    \left\vert\sum_{T\in\mesh}\dual{\bA\nabla w\normal}{\widetilde v}_{\partial T}\right\rvert &= \left\lvert\sum_{F\in\faces_\mesh^0} \dual{\jump{\bA\nabla w\normal}}{\widetilde v}_{F}\right\rvert 
    \lesssim \left(\sum_{T\in\mesh} h_T\|\jump{\bA\nabla w\normal}\|_{\partial T\setminus\Gamma}^2\right)^{1/2}\|v\|_{H^1(\Omega)}.
  \end{align*}
  Overall, we have shown that
  \begin{align*}
    \sup_{0\neq v\in V_\Omega} \frac{F_\Omega(v-\Pi_\mesh v)-b_\Omega(w,v-\Pi_\mesh v)}{\|v\|_{V_\Omega}}\lesssim \rho_\Omega(w),
  \end{align*}
  which concludes the proof.
\end{proof}

\begin{remark}\label{rem:weakformsmooth}
  Note that if $w$ is sufficiently smooth, e.g., if $w\in H^2(\Omega)$, then $\div\bA\nabla w\in L^2(\Omega)$ by our assumptions on $\bA$, so that the jump terms in the estimator contribution $\rho_\Omega(w)$ vanish.
  This regularity is ensured, for instance, when $w$ is the output of a neural network with smooth activation functions.
\end{remark}

\begin{remark}\label{rem:etaOmegaPrecond}
  In view of efficient computation of $\eta_\Omega$ as discussed in Section~\ref{sec:computelowerbound}, we note that many optimal preconditioners exist for this problem, e.g., the BPX preconditioner or other multilevel techniques, see~\cite{Oswald94}.
\end{remark}

\subsubsection{Bubble enriched test space}\label{sec:weakform:bubbleEnriched}
We choose $V_{\Omega,\mesh} = P^{k+1}(\mesh)\cap H_0^1(\Omega)+P_b^m(\mesh)$ and set for all $w\in H^1(\Omega)$ with $\div\bA\nabla w|_T\in L^2(T)$ for all $T\in\mesh$, 
\begin{subequations}\label{eq:defWeakFormBubbleEstimators}
\begin{align}
  \eta_{\Omega}(w) &= \sup_{0\neq v\in V_{\Omega,\mesh}} \frac{\ip{f}{v}_\Omega-b_\Omega(w,v)}{\|\nabla v\|_{\Omega}}
  \\
  \rho_{\Omega}(w)^2 &= \sum_{T\in\mesh}h_T^2\|(1-\pi_\mesh^m)(f+\div\bA\nabla w-\bbeta\cdot\nabla w-cw)\|_T^2
  + h_T\|\jump{\bA\nabla w\normal}\|_{\partial T\setminus\Gamma}^2, \\
  \eta(w)^2 &= \eta_\Omega(w)^2 + \eta_\Gamma(w)^2, \\
  \rho(w)^2 &= \rho_\Omega(w)^2 + \rho_\Gamma(w)^2.
\end{align}
\end{subequations}
Here, $(\eta_\Gamma$, $\rho_\Gamma)$ denote any of the options described in Section~\ref{sec:boucond}.

\begin{theorem}
  Under the aforegoing definitions~\eqref{eq:defWeakFormBubbleEstimators} and assumptions, the estimates
  \begin{align*}
    \eta(w) \lesssim \|u-w\|_{H^1(\Omega)} \lesssim \eta(w) + \rho(w)
  \end{align*}
  hold for all $w\in H^1(\Omega)$, with $\div\bA\nabla w|_T\in L^2(T)$, $T\in\mesh$.
\end{theorem}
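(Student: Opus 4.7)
The plan is to mimic the proof of Theorem~\ref{thm:lagrangeFEM}, replacing the role of the Scott--Zhang operator $J_{\mesh,0}^{k+1}$ by the bubble-enriched variant $J_{\mesh,0,b}^{k+1,m}$ from Lemma~\ref{lem:szbubble}. Since the boundary contribution is already handled uniformly by Section~\ref{sec:boucond} through $\eta_\Gamma + \rho_\Gamma$, it suffices to prove the volume estimate
\begin{align*}
  \eta_\Omega(w) \lesssim \|F_\Omega - B_\Omega w\|_{V_\Omega'} \lesssim \eta_\Omega(w) + \rho_\Omega(w).
\end{align*}
Set $\Pi_\mesh := J_{\mesh,0,b}^{k+1,m}$, which maps $V_\Omega = H_0^1(\Omega)$ into $V_{\Omega,\mesh} = P^{k+1}(\mesh)\cap H_0^1(\Omega)+P_b^m(\mesh)$ and is bounded by Lemma~\ref{lem:szbubble}. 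The lower bound is then a direct consequence of the abstract framework (Theorem~\ref{thm:equivest} applied to $B_\Omega$), as in Theorem~\ref{thm:lagrangeFEM}.

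For the upper bound, let $v\in V_\Omega$ and set $\widetilde v := v - \Pi_\mesh v$. Element-wise integration by parts gives
\begin{align*}
  F_\Omega(\widetilde v) - b_\Omega(w,\widetilde v) = \sum_{T\in\mesh} \ip{f+\div\bA\nabla w - \bbeta\cdot\nabla w - cw}{\widetilde v}_T - \sum_{F\in\faces_\mesh^0} \dual{\jump{\bA\nabla w\normal}}{\widetilde v}_F.
\end{align*}
The critical new ingredient compared to Theorem~\ref{thm:lagrangeFEM} is the $L^2$ orthogonality in Lemma~\ref{lem:szbubble}: $\ip{q_m}{v-\Pi_\mesh v}_T = 0$ for all $q_m\in P^m(T)$ and $T\in\mesh$. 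This lets us insert $(1-\pi_\mesh^m)$ in front of the volume residual at no cost, so that
\begin{align*}
  \sum_{T\in\mesh} \ip{f+\div\bA\nabla w - \bbeta\cdot\nabla w - cw}{\widetilde v}_T = \sum_{T\in\mesh} \ip{(1-\pi_\mesh^m)(f+\div\bA\nabla w - \bbeta\cdot\nabla w - cw)}{\widetilde v}_T.
\end{align*}
Combining with the local approximation bound $\|\widetilde v\|_T \lesssim h_T\|\nabla v\|_{\Omega(T)}$ from Lemma~\ref{lem:szbubble} and Cauchy--Schwarz yields the first half of $\rho_\Omega(w)$. The jump terms are estimated exactly as in Theorem~\ref{thm:lagrangeFEM}, using the trace-type bound $\|\widetilde v\|_F^2 \lesssim h_T\|\nabla v\|_{\Omega(T)}^2$ obtained by combining the $L^2$ and $H^1$ estimates of Lemma~\ref{lem:szbubble} with the inverse trace inequality (cf.~\eqref{eq:sztraceest}). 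Summing over $T\in\mesh$ and using shape-regularity to bound the overlap of patches $\Omega(T)$ closes the argument.

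The main obstacle, conceptually, is verifying that the Scott--Zhang-type operator with bubble enrichment from Lemma~\ref{lem:szbubble} simultaneously satisfies $H^1$-boundedness, the optimal $L^2$ approximation estimate, and the $P^m(\mesh)$-orthogonality; once this is granted, the rest of the proof is a direct adaptation. As noted in Remark~\ref{rem:weakformsmooth}, if $w\in H^2(\Omega)$ (e.g., a smooth neural network output), then the jump contributions vanish and only the volume oscillation-type term in $\rho_\Omega(w)$ remains.
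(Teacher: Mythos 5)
Your proposal is correct and follows essentially the same route as the paper: both proofs reduce to the argument of Theorem~\ref{thm:lagrangeFEM} with $\Pi_\mesh = J_{\mesh,0,b}^{k+1,m}$, using the $P^m(\mesh)$-orthogonality from Lemma~\ref{lem:szbubble} to insert $(1-\pi_\mesh^m)$ into the volume residual, and treating the jump and boundary terms verbatim as before.
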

\begin{proof}
  The proof follows the same argumentation as the proof of Theorem~\ref{thm:lagrangeFEM}. 
  The only difference is that we use the operator $\Pi_\mesh = J_{\mesh,0,b}^{k+1,m}$, which allows us to write, for each $T\in\mesh$, the volume residual term as (see Lemma~\ref{lem:szbubble})
  \begin{align*}
    \ip{f+\div\bA\nabla w-\bbeta\cdot\nabla w -cw}{v-\Pi_\mesh v}_T = \ip{(1-\pi_\mesh^m)(f+\div\bA\nabla w-\bbeta\cdot\nabla w -cw)}{v-\Pi_\mesh v}_T.
  \end{align*}
The remaining estimates follow verbatim as in the proof of Theorem~\ref{thm:lagrangeFEM}.
\end{proof}
Remarks~\ref{rem:weakformsmooth} and~\ref{rem:etaOmegaPrecond} apply here as well.
Note that the main difference compared to Section~\ref{sec:lagrangefem} is the presence of the operator $(1-\pi_\mesh^m)$ in the definition of $\rho_\Omega$. This modification reduces the magnitude of the term $\rho_\Omega$, and thus yields a tighter upper bound. 

\subsection{Weak formulation with broken test space}\label{sec:brokenform}
The evaluation of $\eta_\Omega(w)$ for the formulations based on the weak formulation, as described in Section~\ref{sec:weakform} requires inverting the Riesz matrix associated with (discrete) subspaces of $H_0^1(\Omega)$, or alternatively, evaluating appropriate preconditioners (see Remark~\ref{rem:etaOmegaPrecond}). 
This motivates the study of formulations with broken test spaces, which enable a more efficient realization of the Riesz matrix inversion.
For the broken formulation that we study here, we use 
\begin{align*}
  U = H^1(\Omega)\times H^{-1/2}(\partial\mesh), \quad V_\Omega = H^1(\mesh).
\end{align*}
We define the bilinear form $B_\Omega\colon U\to V_\Omega'$ and functional $F_\Omega\in V_\Omega'$ for all $(u,\lambda)\in U$, $v\in V_\Omega$ by
\begin{align*}
  B_\Omega(u,\lambda;v) &:= \ip{\bA\nabla u}{\nabla_\mesh v}_\Omega + \ip{\bbeta\cdot\nabla u+cu}v_\Omega
  - \dual{\lambda}{v}_{\partial\mesh}, \\
  F_\Omega(v) &:= \ip{f}{v}_\Omega.
\end{align*}
For the proof of the following result, we use techniques from~\cite{PrimalDPG13,BreakingSpaces16}.
\begin{lemma}\label{lem:broken}
  The equivalence 
  \begin{align*}
    \|(w,\lambda)\|_U \eqsim \|B_\Omega(w,\lambda)\|_{V_\Omega'} + \|w\|_{1/2,\Gamma}
  \end{align*}
  holds for all $w\in H^1(\Omega)$, and $\lambda\in H^{-1/2}(\partial\mesh)$. In particular, if additionally $\div\bA\nabla w\in L^2(\Omega)$ and $u\in H^1(\Omega)$ is the (unique) solution of~\eqref{eq:weakform}, then
  \begin{align*}
    \|(u,\trdiv\mesh\bA\nabla u)-(w,\trdiv\mesh\bA\nabla w)\|_U \eqsim \|F_\Omega - B_\Omega(w,\lambda)\|_{V_\Omega'} + \|F_\Gamma-B_\Gamma w\|_{V_\Gamma'},
  \end{align*}
  where $B_\Gamma$, $F_\Gamma$ and $V_\Gamma$ are as in Section~\ref{sec:boucond}.
\end{lemma}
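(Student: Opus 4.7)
The plan is to establish the first (abstract) equivalence by treating the upper and lower bounds separately, then deduce the second equivalence via the observation that $(u,\trdiv\mesh\bA\nabla u)$ exactly solves the broken formulation with right-hand side $F_\Omega$ and boundary data $g$.

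For the upper bound, boundedness of $B_\Omega$ follows componentwise: the interior integrals are controlled by $\|\bA\|_{L^\infty}$, $\|\bbeta\|_{L^\infty}$, $\|c\|_{L^\infty}$ together with $\|w\|_{H^1(\Omega)}$ and $\|v\|_{H^1(\mesh)}$, while the skeleton term is bounded by $\|\lambda\|_{-1/2,\partial\mesh}\|v\|_{H^1(\mesh)}$ by Lemma~\ref{lem:traceduality:broken}. The trace bound $\|w\|_{1/2,\Gamma}\lesssim \|w\|_{H^1(\Omega)}$ is standard, giving the $\lesssim$ direction.

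For the lower bound, the key idea is to split the test space and exploit that $H_0^1(\Omega)\subset H^1(\mesh)$ isometrically. First, for any $v\in H_0^1(\Omega)$ and $\lambda=\trdiv\mesh\bsigma\in H^{-1/2}(\partial\mesh)$, integration by parts gives $\dual{\lambda}{v}_{\partial\mesh}=\dual{\trdiv\Omega\bsigma}{v}_\Gamma=0$, so $B_\Omega(w,\lambda;v)=b_\Omega(w,v)$ reduces to the standard weak bilinear form. Since~\eqref{eq:weakform} is well-posed with inf-sup constant depending only on the problem data, and since $w$ solves the weak formulation with right-hand side $v\mapsto B_\Omega(w,\lambda;v)$ on $H_0^1(\Omega)$ and boundary datum $w|_\Gamma$, the well-posedness estimate yields
\begin{align*}
\|w\|_{H^1(\Omega)}\lesssim \|B_\Omega(w,\lambda)\|_{V_\Omega'}+\|w\|_{1/2,\Gamma}.
\end{align*}
Second, to control $\lambda$, use Lemma~\ref{lem:traceduality:broken} and rearrange the definition of $B_\Omega$: for any $v\in H^1(\mesh)$,
\begin{align*}
\dual{\lambda}{v}_{\partial\mesh}=-B_\Omega(w,\lambda;v)+\ip{\bA\nabla w}{\nabla_\mesh v}_\Omega+\ip{\bbeta\cdot\nabla w+cw}{v}_\Omega,
\end{align*}
so dividing by $\|v\|_{H^1(\mesh)}$ and taking the supremum gives $\|\lambda\|_{-1/2,\partial\mesh}\lesssim \|B_\Omega(w,\lambda)\|_{V_\Omega'}+\|w\|_{H^1(\Omega)}$. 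Combining the two bounds completes the lower bound.

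For the second equivalence, note that when $\div\bA\nabla w\in L^2(\Omega)$ the pair $(w,\trdiv\mesh\bA\nabla w)$ is well defined, and elementwise integration by parts shows $B_\Omega(u,\trdiv\mesh\bA\nabla u;v)=\ip{f}{v}_\Omega=F_\Omega(v)$ for all $v\in H^1(\mesh)$, using the PDE $-\div\bA\nabla u+\bbeta\cdot\nabla u+cu=f$ pointwise a.e. in $\Omega$. Applying the first equivalence to the difference pair $(u-w,\trdiv\mesh\bA\nabla u-\trdiv\mesh\bA\nabla w)$ and recalling $u|_\Gamma=g$ so that $\|u-w\|_{1/2,\Gamma}=\|g-B_\Gamma w\|_{V_\Gamma'}$ (using Lemma~\ref{lem:tracedualities}) yields the claim. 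The main obstacle is the lower bound, and specifically the decoupling trick of restricting to $H_0^1(\Omega)$ to kill the skeleton term and isolate $w$ before recovering $\lambda$ by duality—this is the broken-space analogue of the argument in~\cite{PrimalDPG13,BreakingSpaces16} and requires that the underlying unbroken problem be well-posed, which is guaranteed by the assumption $-\tfrac12\div\bbeta+c\geq 0$.
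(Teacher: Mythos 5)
Your proof is correct, and it takes a noticeably more direct and self-contained route than the paper. The paper proceeds by lifting the boundary data: it constructs an auxiliary pair $(\widetilde w,\widetilde\lambda)$ with $\widetilde w$ solving~\eqref{eq:weakform} with $f=0$, $g=w|_\Gamma$, and $\widetilde\lambda=\trdiv\mesh\bA\nabla\widetilde w$, so that $B_\Omega(\widetilde w,\widetilde\lambda)=0$, $(w-\widetilde w)|_\Gamma=0$, and $\|(\widetilde w,\widetilde\lambda)\|_U\lesssim\|w\|_{1/2,\Gamma}$; the zero-trace difference $(w-\widetilde w,\lambda-\widetilde\lambda)$ is then handled by citing the homogeneous broken inf-sup result from~\cite{PrimalDPG13}, and a triangle inequality finishes the bound. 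You instead prove the inf-sup bound from scratch by splitting the test space: testing with $v\in H_0^1(\Omega)\subset H^1(\mesh)$ kills the skeleton term $\dual{\lambda}{v}_{\partial\mesh}$, reducing $B_\Omega(w,\lambda;\cdot)$ to the standard bilinear form on $H_0^1(\Omega)$, so nonhomogeneous well-posedness of~\eqref{eq:weakform} yields $\|w\|_{H^1(\Omega)}\lesssim\|B_\Omega(w,\lambda)\|_{V_\Omega'}+\|w\|_{1/2,\Gamma}$; you then recover $\|\lambda\|_{-1/2,\partial\mesh}$ from Lemma~\ref{lem:traceduality:broken} by rearranging the definition of $B_\Omega$. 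What the paper's modular approach buys is brevity (it outsources the core inf-sup estimate), whereas yours buys transparency (it makes explicit the test-space decoupling that underlies the citation and absorbs the nonhomogeneous boundary trace in one step). Both hinge on the same ingredients — well-posedness of the unbroken weak form and the broken trace duality — and both handle the second equivalence identically by verifying $B_\Omega(u,\trdiv\mesh\bA\nabla u)=F_\Omega$ and applying the first equivalence to the difference pair, noting $\|u-w\|_{1/2,\Gamma}=\|F_\Gamma-B_\Gamma w\|_{V_\Gamma'}$ via Lemma~\ref{lem:tracedualities}.
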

\begin{proof}
  The second asserted equivalence follows directly from the first. To establish the first one, let $(w,\lambda)\in U$ be given. Define $\widetilde w\in H^1(\Omega)$ as the solution of~\eqref{eq:weakform} with $f=0$ and $g=w|_\Gamma$. One verifies that $\widetilde\lambda:= \trdiv\mesh\bA\nabla \widetilde w \in H^{-1/2}(\partial\mesh)$.
  In particular, $(\widetilde w,\widetilde\lambda)\in U$ and, by construction, $B_\Omega(\widetilde w,\widetilde\lambda)=0$. Observe further that $w-\widetilde w\in H_0^1(\Omega)$. Thus, by~\cite{PrimalDPG13}, it follows that
  \begin{align*}
    \|(w-\widetilde w,\lambda-\widetilde\lambda)\|_U \eqsim \|B_\Omega(w-\widetilde w,\lambda-\widetilde\lambda)\|_{V_\Omega'} = \|B_\Omega(w,\lambda)\|_{V_\Omega'}.
  \end{align*}
  Noting that $\|(\widetilde w,\widetilde\lambda)\|_U \lesssim \|\widetilde w\|_{H^1(\Omega)} \lesssim \|w\|_{1/2,\Gamma}$, and applying the triangle inequality, proves that
  \begin{align*}
    \|(w,\lambda)\|_U \leq \|(w-\widetilde w,\lambda-\widetilde\lambda)\|_U + \|(\widetilde w,\widetilde\lambda)\|_U 
    \lesssim \|B_\Omega(w,\lambda)\|_{V_\Omega'} + \|w\|_{1/2,\Gamma}.
  \end{align*}
  The right-hand side is bounded by $\|(w,\lambda)\|_U$ since $B_\Omega$ is bounded and the trace operator $H^1(\Omega)\to H^{1/2}(\Gamma)$ is bounded. This finishes the proof.
\end{proof}
We choose $V_{\Omega,\mesh} = P^k(\mesh)$ for some $k\in\N_0$ and set, for all $w\in H^1(\Omega)$ with $\bA\nabla w\in \Hdivset\Omega$, 
\begin{subequations}\label{eq:defBrokenWeakFormEstimators}
  \begin{align}
    \eta_\Omega(w) &= \sup_{0\neq v\in V_{\Omega,\mesh}} \frac{\ip{f}v_\Omega-b_\Omega(w,v)}{\|v\|_V}, \\
    \rho_\Omega(w)^2 &= \sum_{T\in\mesh} h_T^2\|(1-\pi_\mesh^k)(f+\div\bA\nabla w-\bbeta\cdot\nabla w-cw)\|_T^2, \\
    \eta^2 &= \eta_\Omega^2 + \eta_\Gamma^2, \\
    \rho^2 &= \rho_\Omega^2 + \rho_\Gamma^2.
\end{align}
\end{subequations}
Here, $(\eta_\Gamma$, $\rho_\Gamma)$ denote any of the options described in Section~\ref{sec:boucond}.

\begin{theorem}\label{thm:equivest:brokenweak}
  Under the aforegoing definitions~\eqref{eq:defBrokenWeakFormEstimators} and assumptions, the estimates
  \begin{align*}
\eta(w) \lesssim \|u-w\|_{H^1(\Omega)} + \|\trdiv\mesh\bA\nabla u - \trdiv\mesh\bA\nabla w\|_{-1/2,\partial\mesh} \lesssim \eta(w) + \rho(w)
  \end{align*}
  hold for all $w\in H^1(\Omega)$, with $\bA\nabla w\in \Hdivset\Omega$.
\end{theorem}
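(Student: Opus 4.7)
The plan is to combine Lemma~\ref{lem:broken} with the abstract Corollary~\ref{cor:equivest}, applied separately to the volume component $B_\Omega$ and the boundary component $B_\Gamma$. Setting $\lambda:=\trdiv\mesh\bA\nabla w$, Lemma~\ref{lem:broken} yields
\begin{align*}
\|u-w\|_{H^1(\Omega)} &+ \|\trdiv\mesh\bA\nabla u - \trdiv\mesh\bA\nabla w\|_{-1/2,\partial\mesh} \\
&\eqsim \|F_\Omega-B_\Omega(w,\lambda)\|_{V_\Omega'} + \|F_\Gamma-B_\Gamma w\|_{V_\Gamma'}.
\end{align*}
The boundary term is already controlled by $\eta_\Gamma+\rho_\Gamma$ through one of Lemmas~\ref{lem:bouest1}, \ref{lem:bouest2}, or \ref{lem:bouest3}, so the only remaining task is to show
\begin{align*}
\eta_\Omega(w) \lesssim \|F_\Omega-B_\Omega(w,\lambda)\|_{V_\Omega'} \lesssim \eta_\Omega(w)+\rho_\Omega(w).
\end{align*}

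For this, I would apply Theorem~\ref{thm:equivest} with $\Pi_\mesh:=\pi_\mesh^k\colon H^1(\mesh)\to P^k(\mesh)$, the elementwise $L^2$-orthogonal projection. The lower bound on $\eta_\Omega$ is immediate. For the upper bound, two ingredients are needed. First, I would verify that $\pi_\mesh^k$ is bounded on $V_\Omega=H^1(\mesh)$: since both the operator and the norm are fully local, this reduces to an elementwise computation, where $\|\pi_T^k v\|_T\leq\|v\|_T$ is trivial and, writing $\pi_T^k v = \pi_T^k(v-\bar v)+\bar v$ with $\bar v$ the elementwise mean, the inverse inequality combined with Poincar\'e--Wirtinger yields $\|\nabla\pi_T^k v\|_T\lesssim\|\nabla v\|_T$ with constant independent of $h_T$.

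Second, and crucially, the residual admits a particularly clean form. Because $\bA\nabla w\in\Hdivset\Omega$ and $\lambda=\trdiv\mesh\bA\nabla w$, elementwise integration by parts of $(\bA\nabla w,\nabla_\mesh v)_\Omega$ generates boundary contributions $\sum_T \dual{\bA\nabla w\cdot\normal}{v}_{\partial T}$ that cancel \emph{exactly} with $\dual{\lambda}{v}_{\partial\mesh}$, leaving
\begin{align*}
F_\Omega(v)-B_\Omega(w,\lambda;v) = \ip{r}{v}_\Omega, \qquad r:=f+\div\bA\nabla w-\bbeta\cdot\nabla w-cw.
\end{align*}
No inter-element jump terms survive, in sharp contrast to the conforming setting of Section~\ref{sec:lagrangefem}. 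Combining the $L^2(T)$-orthogonality $\ip{r}{v-\pi_\mesh^k v}_T = \ip{(1-\pi_\mesh^k)r}{v-\pi_\mesh^k v}_T$ with the approximation bound $\|v-\pi_T^k v\|_T\lesssim h_T\|\nabla v\|_T$, an elementwise Cauchy--Schwarz delivers the desired bound on the quantity $\mu$ in the notation of~\eqref{eq:defmu}, namely $\mu(w)\lesssim\rho_\Omega(w)$.

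The main subtlety---rather than a true obstacle---is the $H^1(\mesh)$-stability of the discontinuous projection $\pi_\mesh^k$. This works precisely because the broken structure of the test space decouples stability into purely local statements, with no continuity across interfaces to control. The remainder of the argument closely parallels the proof of Theorem~\ref{thm:lagrangeFEM}, with the welcome simplification that the flux trace variable $\lambda$ absorbs all jump contributions which would otherwise have to appear in $\rho_\Omega$.
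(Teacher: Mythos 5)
Your proposal is correct and follows essentially the same route as the paper's proof: choose $\Pi_\mesh=\pi_\mesh^k$, use the definition of $\trdiv\mesh$ so that elementwise integration by parts cancels all interface terms and leaves $\ip{(1-\pi_\mesh^k)r}{v-\pi_\mesh^k v}_\Omega$, then conclude with the local approximation property and Cauchy--Schwarz. Your elementwise argument for the $H^1(\mesh)$-stability of $\pi_\mesh^k$ (inverse inequality plus Poincar\'e--Wirtinger) correctly supplies a detail the paper only asserts.
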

\begin{proof}
  Based on Corollary~\ref{cor:equivest} and our previous considerations from Section~\ref{sec:boucond}, it only remains to show that
  \begin{align*}
    \|(F_\Omega-B_\Omega (w,\trdiv\mesh\bA\nabla w))\circ(1-\Pi_{\mesh})\|_{V_{\Omega}'} \lesssim \rho(w)
  \end{align*}
  for some bounded operator $\Pi_{\mesh}\colon V\to V_{\Omega,\mesh}$. Choosing $\Pi_{\mesh} = \pi_\mesh^k$ (which can be easily seen to be bounded), and employing integration by parts along with the definition of the trace operator, we see that
  \begin{align*}
    &F_\Omega(v-\pi_\mesh^k v) -b_\Omega( (w,\trdiv\mesh\bA\nabla w),v-\pi_\mesh^kv) \\
    &\qquad = \ip{f}{v-\pi_\mesh^k v} 
    \\
    &\qquad\qquad-\Big( \ip{\bA\nabla w}{\nabla_\mesh (v-\pi_\mesh^k v)}_\Omega + \ip{\bbeta\cdot\nabla w+cw}{v-\pi_\mesh^k v}_\Omega 
    -\dual{\trdiv\mesh\bA\nabla w}{v-\pi_\mesh^k v}_{\partial\mesh}\Big)
    \\
    &\qquad = \ip{f+\div\bA\nabla w-\bbeta\cdot\nabla w-cw}{v-\pi_\mesh^k v}_\Omega \\
    &\qquad = \ip{(1-\pi_\mesh^k)(f+\div\bA\nabla w-\bbeta\cdot\nabla w-cw)}{v-\pi_\mesh^k v}_\Omega.
  \end{align*}
  Using the local approximation property $\|v-\pi_\mesh^k v\|_T\leq h_T \|\nabla v\|_T$ ($T\in\mesh$) and the Cauchy--Schwarz inequality, we conclude the proof.
\end{proof}
We stress that the computation of all volume terms (i.e., $\eta_\Omega$ and $\rho_\Omega$) is completely local, since the inverse of the Riesz matrix required for $\eta_\Omega$ can be computed element-wise.  
In fact, as the following observation shows, one may even replace $\eta_\Omega$ with simple $L^2$ volume terms.
\begin{theorem}\label{thm:brokenweak:eta}
  Under the assumptions of Theorem~\ref{thm:equivest:brokenweak}, set
  $r_\mesh(w)|_T:= f+\div\bA\nabla w - \bbeta\cdot\nabla w-cw|_T \in L^2(T)$ for all $T\in\mesh$. 
  Then,
  \begin{align*}
    \|\pi_\mesh^0 r_\mesh(w)\|_\Omega \leq \eta_\Omega(w)
    \leq \|\pi_\mesh^0 r_\mesh(w)\|_\Omega + \|h_\mesh(1-\pi_\mesh^0)\pi_\mesh^k r_\mesh(w)\|_\Omega
  \end{align*}
  Furthermore, $\|h_\mesh(1-\pi_\mesh^0)\pi_\mesh^k r_\mesh(w)\|_\Omega \lesssim \eta_\Omega(w)$ so that, in particular, 
  \begin{align*}
    \eta_\Omega(w) \eqsim \|\pi_\mesh^0 r_\mesh(w)\|_\Omega + \|h_\mesh(1-\pi_\mesh^0)\pi_\mesh^k r_\mesh(w)\|_\Omega.
  \end{align*}
\end{theorem}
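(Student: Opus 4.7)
The plan is to reduce the dual norm $\eta_\Omega(w)$ to a purely volumetric object and then bound it both above and below by suitable choices of piecewise polynomial test functions. First, as in the proof of Theorem~\ref{thm:equivest:brokenweak}, an element-wise integration by parts (valid because $\bA\nabla w\in\Hdivset\Omega$) together with the definition of $\trdiv\mesh$ collapses the numerator in $\eta_\Omega(w)$ to $\ip{r_\mesh(w)}{v}_\Omega$, and since $v\in P^k(\mesh)$ this further simplifies to $\ip{\pi_\mesh^k r_\mesh(w)}{v}_\Omega$. Thus
\begin{align*}
  \eta_\Omega(w) = \sup_{0\neq v\in P^k(\mesh)} \frac{\ip{\pi_\mesh^k r_\mesh(w)}{v}_\Omega}{\|v\|_V},
\end{align*}
and the problem reduces to a polynomial duality computation on each element.

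The left inequality $\|\pi_\mesh^0 r_\mesh(w)\|_\Omega\leq\eta_\Omega(w)$ follows immediately by testing with the piecewise constant $v:=\pi_\mesh^0 r_\mesh(w)\in P^0(\mesh)\subset P^k(\mesh)$: the gradient contribution to $\|v\|_V$ vanishes, so $\|v\|_V=\|v\|_\Omega$, and the numerator equals $\|v\|_\Omega^2$. For the upper bound on $\eta_\Omega(w)$, I would split $\pi_\mesh^k r_\mesh(w) = \pi_\mesh^0 r_\mesh(w) + (1-\pi_\mesh^0)\pi_\mesh^k r_\mesh(w)$ and estimate the two contributions separately. The mean part is controlled directly by $\|\pi_\mesh^0 r_\mesh(w)\|_\Omega\,\|v\|_\Omega$. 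The oscillation part has zero mean on each $T\in\mesh$, so $v$ can be replaced there by $v-\pi_\mesh^0 v|_T$; the Poincar\'e-type estimate $\|v-\pi_\mesh^0 v\|_T\lesssim h_T\|\nabla v\|_T$ then produces the factor $h_T$, and summing with Cauchy--Schwarz yields the asserted upper bound.

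The last inequality $\|h_\mesh(1-\pi_\mesh^0)\pi_\mesh^k r_\mesh(w)\|_\Omega\lesssim\eta_\Omega(w)$ is the main obstacle, as it requires producing an explicit test function that realizes this quantity as a lower bound. I would set $r:=(1-\pi_\mesh^0)\pi_\mesh^k r_\mesh(w)\in P^k(\mesh)$ and choose $v|_T := h_T^2\, r|_T$. Because $r|_T$ has zero mean on $T$, the term $\ip{\pi_\mesh^0 r_\mesh(w)}{v}_T$ vanishes, so the numerator collapses to $\sum_{T\in\mesh} h_T^2\|r|_T\|_T^2 = \|h_\mesh r\|_\Omega^2$. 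To control the denominator, the key tool is the polynomial inverse estimate $\|\nabla r|_T\|_T\lesssim h_T^{-1}\|r|_T\|_T$ on $P^k(T)$, which gives $\|\nabla v\|_T \lesssim h_T\|r|_T\|_T$; combined with $\|v|_T\|_T = h_T^2\|r|_T\|_T \leq \diam(\Omega)\, h_T \|r|_T\|_T$, this yields $\|v\|_V\lesssim\|h_\mesh r\|_\Omega$. Dividing produces $\eta_\Omega(w)\gtrsim\|h_\mesh r\|_\Omega$. Combining the three bounds and observing that each is obtained from the same simplified expression for $\eta_\Omega(w)$ establishes the claimed equivalence.
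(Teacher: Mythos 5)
Your proposal is correct and follows essentially the same route as the paper: reduce the numerator to $\ip{r_\mesh(w)}{v}_\Omega$ by element-wise integration by parts, obtain the lower bound by testing with piecewise constants, obtain the upper bound by the mean/oscillation splitting combined with the elementwise Poincar\'e estimate, and prove the final inequality by a duality argument using the inverse estimate $h_T\|\nabla q\|_T\lesssim\|q\|_T$ on $P^k(T)$. The only cosmetic difference is in the last step, where you test directly with $h_\mesh^2(1-\pi_\mesh^0)\pi_\mesh^k r_\mesh(w)$ while the paper bounds the larger quantity $\|h_T\pi_\mesh^k r_\mesh(w)\|_T$ by the local dual norm; both arguments are equivalent.
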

\begin{proof}
  First, for all $v\in V_{\Omega,\mesh} = P^k(\mesh)$, integration by parts gives
  \begin{align*}
    F_\Omega(v) -b_\Omega( (w,\trdiv\mesh\bA\nabla w),v) = \ip{f+\div\bA\nabla w-\bbeta\cdot\nabla w-cw}{v}_\Omega
= \ip{r_\mesh(w)}v_\Omega.
  \end{align*}
  The lower bound follows by observing that $P^0(\mesh)\subseteq P^k(\mesh)$ and $\|z\|_{V_\Omega} = \|z\|_\Omega$ for $z\in P^0(\mesh)$, yielding
  \begin{align*}
    \eta_\Omega(w) &= \sup_{0\neq v\in P^k(\mesh)} \frac{\ip{r_\mesh(w)}{v}_\Omega}{\|v\|_{V_\Omega}}
    \geq \sup_{0\neq v\in P^0(\mesh)} \frac{\ip{r_\mesh(w)}{v}_\Omega}{\|v\|_\Omega}
    \\
    &= \sup_{0\neq v\in P^0(\mesh)} \frac{\ip{\pi_\mesh^0r_\mesh(w)}{v}_\Omega}{\|v\|_\Omega}
    = \|\pi_\mesh^0r_\mesh(w)\|_\Omega.
  \end{align*}
  For the upper bound decompose $P^k(\mesh)\ni v = \pi_\mesh^0v + (1-\pi_\mesh^0)v$ and note that
  \begin{align*}
    |\ip{r_\mesh(w)}{v}_\Omega| &= |\ip{\pi_\mesh^kr_\mesh(w)}{v}_\Omega| \leq 
    |\ip{\pi_\mesh^kr_\mesh(w)}{\pi_\mesh^0 v}_\Omega| + |\ip{\pi_\mesh^kr_\mesh(w)}{(1-\pi_\mesh^0) v}_\Omega|
    \\
    &= |\ip{\pi_\mesh^0r_\mesh(w)}{\pi_\mesh^0 v}_\Omega| + |\ip{(1-\pi_\mesh^0)\pi_\mesh^kr_\mesh(w)}{(1-\pi_\mesh^0) v}_\Omega|
  \end{align*}
  The first term is bounded by $\|\pi_\mesh^0 r_\mesh(w)\|_\Omega\|v\|_{V_\Omega}$ and the second by using the local approximation property, i.e.,
  \begin{align*}
    |\ip{(1-\pi_\mesh^0)\pi_\mesh^kr_\mesh(w)}{(1-\pi_\mesh^0) v}_\Omega| \leq \|h_\mesh(1-\pi_\mesh^0)\pi_\mesh^kr_\mesh(w)\|_\Omega\|\nabla_\mesh v\|_\Omega.
  \end{align*}
  This implies the second inequality. 
  In addition, note that
  \begin{align*}
    \|h_T\pi_\mesh^k r_\mesh(w)\|_T^2 &= \ip{r_\mesh(w)}{h_T^2\pi_\mesh^kr_\mesh(w)}_T
    \\
    &\lesssim \sup_{0\neq v\in P^k(T)} \frac{\ip{r_\mesh(w)}{v}_T}{\|v\|_{H^1(T)}} h_T^2\|\pi_\mesh^kr_\mesh(w)\|_{H^1(T)}
    \\
    &\lesssim \|\pi_\mesh^k r_\mesh(w)\|_{(H^1(T))'} h_T\|\pi_\mesh^kr_\mesh(w)\|_T
  \end{align*}
  where, in the last estimate, we used the inverse estimate $h_T\|\nabla q\|_T\lesssim \|q\|_T$ for $q\in P^k(T)$.
  Dividing by the last term, this shows that
  \begin{align*}
    \|h_T(1-\pi_\mesh^0)\pi_\mesh^k r_\mesh(w)\|_T \leq \|h_T\pi_\mesh^k r_\mesh(w)\|_T \lesssim \sup_{0\neq v\in P^k(T)} \frac{\ip{r_\mesh(w)}{v}_T}{\|v\|_{H^1(T)}}.
  \end{align*}
  Squaring, summing over all $T\in\mesh$ and noting that $\|\cdot\|_{V_{\Omega,\mesh}'}^2 = \sum_{T\in\mesh} \|\cdot\|_{P^k(T)'}^2$ (where, in this notation, $P^k(T)$ is equipped with the $H^1(T)$ norm) proves the last assertion.
\end{proof}

\begin{remark}
  By the definition of $\eta_\Omega$, $\rho_\Omega$, the equivalence from Theorem~\ref{thm:equivest:brokenweak} and properties of the orthogonal projection, it is straightforward to see that
  \begin{align*}
    \eta_\Omega(w)^2 + \rho_\Omega(w)^2 \eqsim \|\pi_\mesh^0 r_\mesh(w)\|_\Omega^2 + \|h_\mesh(1-\pi_\mesh^0) r_\mesh(w)\|_\Omega^2.
  \end{align*}
\end{remark}

\subsection{Strong formulation}\label{sec:strongform}
In this section, we discuss a strong formulation of our model problem~\eqref{eq:modelPDE} which can be used to interpret classical PINNs; see the comments below. 
To that end, define the space and norm
\begin{align*}
  U&= \set{w\in H^1(\Omega)}{\div\bA\nabla u\in L^2(\Omega)}, \\
  \|w\|_U^2 &= \|w\|_{H^1(\Omega)}^2 + \|\div\bA\nabla w\|_\Omega^2, \quad w\in U
\end{align*}
as well as $V_\Omega := L^2(\Omega)$ with norm $\|\cdot\|_{V_\Omega} := \|\cdot\|_\Omega$.
Define the operator $B_\Omega\colon U \to V_\Omega'$ by
\begin{align*}
  B_\Omega(u)(v) := \ip{-\div\bA\nabla u+\bbeta\cdot\nabla u + cu}v_\Omega
\end{align*}
and $B_\Gamma$, $V_\Gamma$ as discussed in Section~\ref{sec:boucond} above. 
\begin{lemma}\label{lem:strongform}
  The equivalence
  \begin{align*}
    \|w\|_U \eqsim \|B_\Omega w\|_{V_\Omega'} + \|B_\Gamma w\|_{V_\Gamma'}
  \end{align*}
  holds for all $w\in U$. 
\end{lemma}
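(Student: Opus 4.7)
The plan is to establish the two inequalities separately, noting that both directions follow from previously stated facts about the weak formulation~\eqref{eq:weakform}. I would handle boundedness first as it is purely a matter of unpacking definitions, and then appeal to well-posedness for the lower bound.

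For the upper bound $\|B_\Omega w\|_{V_\Omega'} + \|B_\Gamma w\|_{V_\Gamma'}\lesssim \|w\|_U$, I would observe that by the Riesz identification $V_\Omega' = L^2(\Omega)$, the operator $B_\Omega$ acts as the pointwise $L^2$ function $-\div\bA\nabla w + \bbeta\cdot\nabla w + cw$. The triangle inequality together with the $L^\infty$ bounds on $\bbeta$ and $c$ then gives
\begin{align*}
  \|B_\Omega w\|_{V_\Omega'} \leq \|\div\bA\nabla w\|_\Omega + \|\bbeta\|_{L^\infty}\|\nabla w\|_\Omega + \|c\|_{L^\infty}\|w\|_\Omega \lesssim \|w\|_U.
\end{align*}
For the boundary contribution, continuity of the trace operator $\trgrad\Omega\colon H^1(\Omega)\to H^{1/2}(\Gamma)$ yields $\|B_\Gamma w\|_{V_\Gamma'} = \|\trgrad\Omega w\|_{1/2,\Gamma}\lesssim \|w\|_{H^1(\Omega)}\leq \|w\|_U$.

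For the reverse bound, I would use the trick of reading $w$ as the weak solution of its own PDE. Given $w\in U$, set $f := -\div\bA\nabla w + \bbeta\cdot\nabla w + cw \in L^2(\Omega)$ and $g := \trgrad\Omega w \in H^{1/2}(\Gamma)$. By construction $w\in H^1(\Omega)$ satisfies~\eqref{eq:weakform} with this data; moreover, since $L^2(\Omega)\hookrightarrow H^{-1}(\Omega)$, the well-posedness of~\eqref{eq:weakform} recalled right after that equation gives
\begin{align*}
  \|w\|_{H^1(\Omega)} \lesssim \|f\|_{H^{-1}(\Omega)} + \|g\|_{1/2,\Gamma}
  \lesssim \|B_\Omega w\|_{V_\Omega'} + \|B_\Gamma w\|_{V_\Gamma'}.
\end{align*}
It then remains to recover the $L^2$ norm of $\div\bA\nabla w$, which I would do by reading it off directly from the PDE,
\begin{align*}
  \|\div\bA\nabla w\|_\Omega
  = \|{-B_\Omega w + \bbeta\cdot\nabla w + cw}\|_\Omega
  \leq \|B_\Omega w\|_{V_\Omega'} + \|\bbeta\|_{L^\infty}\|\nabla w\|_\Omega + \|c\|_{L^\infty}\|w\|_\Omega,
\end{align*}
and then combining with the previous $H^1$ estimate.

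There is no serious obstacle here: the only ingredient beyond unpacking definitions is the well-posedness of~\eqref{eq:weakform}, which is explicitly assumed. The one point worth double-checking is that the $L^2$-regularity of the datum $f$ we produce is indeed handled by the stated well-posedness (stated for $f\in H^{-1}(\Omega)$), which is immediate from the continuous embedding $L^2(\Omega)\hookrightarrow H^{-1}(\Omega)$; no extra regularity hypothesis on $\bA$ beyond $L^\infty$ bounds and ellipticity is needed for this lemma.
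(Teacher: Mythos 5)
Your proof is correct, but it takes a genuinely different route from the paper's. You read $w$ as the weak solution of its own problem with data $(f,g)=(B_\Omega w, B_\Gamma w)$ and invoke the stability estimate $\|w\|_{H^1(\Omega)}\lesssim\|f\|_{H^{-1}(\Omega)}+\|g\|_{1/2,\Gamma}$ coming from well-posedness of~\eqref{eq:weakform}, then recover $\|\div\bA\nabla w\|_\Omega$ algebraically from the equation. The paper instead splits $w = w_0 + \widetilde w$, where $\widetilde w$ solves~\eqref{eq:modelPDE} with $f=0$ and $g=w|_\Gamma$, bounds $\|\widetilde w\|_U\lesssim\|B_\Gamma w\|_{V_\Gamma'}$ as in Lemma~\ref{lem:broken}, and then proves $\|w_0\|_U\lesssim\|B_\Omega w\|_{V_\Omega'}$ by a direct coercivity argument for the $H_0^1(\Omega)$ part (integration by parts, the sign condition $-\tfrac12\div\bbeta+c\geq 0$, Cauchy--Schwarz and Young). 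The two are equivalent in substance---the stability estimate you call on is a consequence of that same coercivity---but your version is more economical, since it reuses the stated well-posedness wholesale instead of re-deriving the piece it needs. Your closing remark is also accurate: the embedding $L^2(\Omega)\hookrightarrow H^{-1}(\Omega)$ is all that is needed to feed $f\in L^2(\Omega)$ into the $H^{-1}$ stability estimate, and the additional assumption $\nabla(\bA)_{jk}\in L^\infty$ plays no role in this lemma; it is there only for the localization of $\rho_\Omega$.
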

\begin{proof}
  Let $w\in U$ be given and consider the splitting $w = w_0 + \widetilde w$ where $\widetilde w\in H^1(\Omega)$ solves~\eqref{eq:modelPDE} with $f=0$ and $g=w|_\Gamma$. 
  It follows that $\widetilde w\in U$, thus, $w_0\in U$ and $w_0|_\Gamma = 0$.
  We prove that $\|w_0\|_{U}\lesssim \|B_\Omega w\|_{V_\Omega'}$ and $\|\widetilde w\|_U\lesssim \|B_\Gamma w\|_{V_\Gamma'}$. 
  The latter follows as in the proof of Lemma~\ref{lem:broken}.
  Note that the properties of $\bA$ and the Poincar\'e--Friedrich's inequality imply that $\|w_0\|_{H^1(\Omega)}^2 \eqsim \ip{\bA\nabla w_0}{w_0}_\Omega$.
  Using integration by parts, $\ip{\bbeta\cdot\nabla w_0+cw_0}{w_0}_\Omega\geq 0$, Cauchy--Schwarz and Young's inequality with parameter $\delta>0$, we infer that
  \begin{align*}
    \|w_0\|_{H^1(\Omega)}^2 &\eqsim \ip{\bA\nabla w_0}{\nabla w_0}_\Omega 
    = \ip{-\div\bA\nabla w_0+\bbeta\cdot\nabla w_0 + cw_0}{w_0}_\Omega -\ip{\bbeta\cdot\nabla w_0+cw_0}{w_0}_\Omega
    \\
    &\leq \ip{-\div\bA\nabla w_0+\bbeta\cdot\nabla w_0 + cw_0}{w_0}_\Omega
    \\
    &\leq \frac{\delta^{-1}}2 \|B_\Omega w_0\|_\Omega^2 + \frac\delta2 \|w_0\|_\Omega^2.
  \end{align*}
  Subtracting the last term for sufficiently small $\delta>0$ and $B_\Omega w_0 = B_\Omega w$ proves that
  \begin{align*}
    \|w_0\|_{H^1(\Omega)} \lesssim \|B_\Omega w\|_{V_\Omega'}.
  \end{align*}
  With the last estimate it also follows that $\|w_0\|_U \lesssim \|B_\Omega w\|_{V_\Omega'}$. 
  This concludes the direction ``$\lesssim$'' of the statement. The other direction follows from the boundedness of the involved operators.
\end{proof}

Set $V_{\Omega,\mesh} = P^k(\mesh)$ and define for $w\in U$
\begin{subequations}\label{eq:defStrongFormEstimators}
  \begin{align}
    \eta_\Omega(w) &= \|\pi_\mesh^k(f+\div\bA\nabla w-\bbeta\cdot\nabla w-cw)\|_\Omega, \\
    \rho_\Omega(w) &= \|(1-\pi_\mesh^k)(f+\div\bA\nabla w-\bbeta\cdot\nabla w-cw)\|_\Omega, \\
    \eta^2 &= \eta_\Omega^2 + \eta_\Gamma^2, \\
    \rho^2 &= \rho_\Omega^2 + \rho_\Gamma^2.
\end{align}
\end{subequations}
Here, $(\eta_\Gamma$, $\rho_\Gamma)$ denote any of the options described in Section~\ref{sec:boucond}.
\begin{theorem}\label{thm:equivest:strongform}
  Under the aforegoing definitions~\eqref{eq:defStrongFormEstimators} and assumptions the estimates
  \begin{align*}
    \eta(w) \lesssim \|u-w\|_{H^1(\Omega)} + \|\div\bA\nabla (u-w)\|_{\Omega} \lesssim \eta(w) + \rho(w).
  \end{align*}
  hold for all $w\in U$.
\end{theorem}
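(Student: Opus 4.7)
The plan is to apply Corollary~\ref{cor:equivest} directly. The space decomposition $V = V_\Omega \times V_\Gamma$ with $B = B_\Omega + B_\Gamma$ is already set up, and Lemma~\ref{lem:strongform} ensures that $B$ is bounded below on $U$, giving $\|u-w\|_U \eqsim \|F_\Omega-B_\Omega w\|_{V_\Omega'} + \|F_\Gamma - B_\Gamma w\|_{V_\Gamma'}$. The boundary summand is already controlled by whichever variant from Section~\ref{sec:boucond} is selected (Lemma~\ref{lem:bouest1}, \ref{lem:bouest2} or \ref{lem:bouest3}). Hence the only real work is to verify that, for the volume part with $V_\Omega = L^2(\Omega)$ and $V_{\Omega,\mesh}=P^k(\mesh)$,
\begin{align*}
  \eta_\Omega(w) \lesssim \|F_\Omega - B_\Omega w\|_{V_\Omega'} \lesssim \eta_\Omega(w) + \rho_\Omega(w).
\end{align*}

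For this, I would take $\Pi_\mesh = \pi_\mesh^k$, the $L^2$-orthogonal projection onto $P^k(\mesh)$. Since $\|\cdot\|_{V_{\Omega,\mesh}}=\|\cdot\|_\Omega=\|\cdot\|_{V_\Omega}$, the norm-equivalence constant is $C_\mathrm{equiv}=1$, and $\|\pi_\mesh^k\|\leq 1$, $\|1-\pi_\mesh^k\|\leq 2$ are immediate. The assumption $w\in U$ gives $\div\bA\nabla w\in L^2(\Omega)$, so the residual
\begin{align*}
  r:=f+\div\bA\nabla w-\bbeta\cdot\nabla w - cw
\end{align*}
lies in $L^2(\Omega)$ and represents $F_\Omega - B_\Omega w$ through $v\mapsto \ip{r}v_\Omega$.

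Then by $L^2$-duality on $P^k(\mesh)$,
\begin{align*}
  \eta_\Omega(w) = \sup_{0\neq v\in P^k(\mesh)} \frac{\ip{r}v_\Omega}{\|v\|_\Omega} = \|\pi_\mesh^k r\|_\Omega,
\end{align*}
which matches the definition in~\eqref{eq:defStrongFormEstimators}. Using self-adjointness of $\pi_\mesh^k$,
\begin{align*}
  \mu_\Omega(w) = \sup_{0\neq v\in V_\Omega} \frac{\ip{r}{v-\pi_\mesh^k v}_\Omega}{\|v\|_\Omega}
  = \sup_{0\neq v\in V_\Omega} \frac{\ip{(1-\pi_\mesh^k)r}{v}_\Omega}{\|v\|_\Omega}
  = \|(1-\pi_\mesh^k)r\|_\Omega = \rho_\Omega(w),
\end{align*}
so the hypothesis of Corollary~\ref{cor:equivest} is verified with $\rho_\Omega$ serving as a computable upper bound (in fact an equality) for $\mu_\Omega$.

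Combining the volume and boundary pieces through Corollary~\ref{cor:equivest} and Lemma~\ref{lem:strongform}, and noting that $\|u-w\|_U^2 = \|u-w\|_{H^1(\Omega)}^2 + \|\div\bA\nabla(u-w)\|_\Omega^2$ is equivalent to the sum $\|u-w\|_{H^1(\Omega)}+\|\div\bA\nabla(u-w)\|_\Omega$ on the right-hand side of the claim, finishes the proof. There is no real obstacle here beyond bookkeeping: this is the cleanest case of the abstract framework, essentially because $V_{\Omega,\mesh}=P^k(\mesh)$ is an unconstrained polynomial space and $\pi_\mesh^k$ is $L^2$-self-adjoint, which makes the splitting $r=\pi_\mesh^k r+(1-\pi_\mesh^k) r$ coincide exactly with the $(\eta_\Omega,\mu_\Omega)$ decomposition.
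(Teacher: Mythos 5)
Your proposal is correct and follows essentially the same path as the paper's proof: apply Corollary~\ref{cor:equivest} with the splitting $B = B_\Omega + B_\Gamma$, invoke Lemma~\ref{lem:strongform} for the bounded-below property, reuse Section~\ref{sec:boucond} for the boundary part, and take $\Pi_\mesh = \pi_\mesh^k$ for the volume part. The only difference is that you spell out the self-adjointness computation showing $\mu_\Omega(w) = \|(1-\pi_\mesh^k)r\|_\Omega = \rho_\Omega(w)$ explicitly, whereas the paper leaves this as a one-line remark that $\pi_\mesh^k$ ``does the job''; your extra detail is a correct elaboration, not a different argument.
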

\begin{proof}
  Based on Corollary~\ref{cor:equivest} and our previous considerations from Section~\ref{sec:boucond} as well as Lemma~\ref{lem:strongform} we only need to show that there exists $\Pi_\mesh\colon V\to V_{\Omega,\mesh}$ with $\|\Pi_\mesh\|<\infty$. Clearly, $\Pi_\mesh = \pi_\mesh^k$ does the job in this case and the proof is concluded. 
\end{proof}

\begin{remark}\label{rem:PINN}
  Note that $\eta_{\Omega}^2(w) + \rho_\Omega^2(w) = \|f+\div\bA\nabla w-\bbeta\cdot\nabla w - cw\|_\Omega^2$. 
  Thus, when using $\eta_{\Omega}^2+\rho_{\Omega}^2$ as part of the loss functional in training, this resembles PINNs. 
  However, note that in classical PINNs, a boundary term of the form
  \begin{align*}
    \alpha \|w-g\|_\Gamma^2
  \end{align*}
  is used with a parameter $\alpha>0$ to be chosen by the user. 
  From our considerations from Section~\ref{sec:boucond} we stress that this term does, in general, not provide a lower nor an upper bound of the boundary error. 
  We propose to replace this term by
    \begin{align*}
      \|h_\faces^{-1/2}(w-g)\|_\Gamma^2 + \|h_\faces^{1/2}(1-\pi_\faces^p)\nabla_\Gamma(w-g)\|_\Gamma^2
    \end{align*}
  which is a guaranteed upper bound for $\|w-g\|_{1/2,\Gamma}^2$, see Remark~\ref{rem:investbou}.
\end{remark}
\section{Numerical examples}\label{sec:numerics}
In this section, we present the results of our numerical experiments. 
Section~\ref{sec:ex:setup} gives an overview of the general setup and introduces notations used throughout the section. 
In section~\ref{sec:ex:smooth} we compare training outcomes using different loss functionals derived in Section~\ref{sec:modelpoisson}. 
In Section~\ref{sec:ex:etaVsEtaRho} we study the effect of using either $\eta^2$ or $\eta^2+\rho^2$ as the loss functional. Section~\ref{sec:ex:enforceBC} focuses on the inclusion of boundary conditions in the neural network output and the potential associated slow-down in convergence. 
In Section~\ref{sec:ex:quad}, we discuss the effect of quadrature and propose a simple algorithm to adjust quadrature precision through mesh refinement (Algorithm~\ref{alg}).
In the final experiment (Section~\ref{sec:ex:Lshape}), we consider the approximation of a singular solution by neural networks in a domain with a re-entrant corner. 

We implemented our code in MATLAB 2025a using the Deep Learning Toolbox and executed all programs on a Linux server with an Intel(R) Xeon(R) Platinum 8375C CPU @ 2.90GHz processor.

\subsection{General setup}\label{sec:ex:setup}
For all examples we consider as model PDE the Poisson problem, i.e., $\bA$ is the identity and $\bbeta=0$, $c=0$ in~\eqref{eq:modelPDE}. 

For the numerical examples, we use fully connected feed-forward neural networks $\net$ with $L$ hidden layers and $N$ neurons per layer. 
As activation function, we employ the $\tanh$ function. 
Each network has an input layer with two arguments and a single output.
We use the generic notation $u_\theta = u_\theta(x,y)$ to represent the output of the network, and we stress that $u_\theta$ is smooth. 
Here, $\theta\in\Theta$ denotes all weights and biases that define the neural network. 
We refer to~\cite{lecun2015deep} for an overview. 

For all experiments, we employ the L-BFGS solver (we use the Matlab function \verb+lbfgsupdate+ in each iteration) to find an approximate minimizer of
\begin{align*}
  \theta^* = \argmin_{\theta\in\Theta} \loss_\star(u_\theta)
\end{align*}
where $\loss_\star$ denotes one of the loss functions defined below. 

For the first three losses we define $\eta_\Gamma$, $\rho_\Gamma$ as in~\eqref{eq:defetarhobou1} with $p=0$ (Section~\ref{sec:boucond:opt1}).
In particular, we define the boundary mesh $\faces$ to be the restriction of $\mesh$ onto $\Gamma$.

{\emph{Weak bubble loss.}}
\begin{align}\label{eq:defloss:wb}
  \loss_\mathrm{wb} = \eta_\Omega^2 + \eta_\Gamma^2 + \rho_\Omega^2 + \rho_\Gamma^2
  \quad\text{with $\eta_\Omega$, $\rho_\Omega$ defined in~\eqref{eq:defWeakFormBubbleEstimators} with $k=0=m$}.
\end{align}

{\emph{Broken loss.}}
\begin{align}\label{eq:defloss:broken}
  \loss_\mathrm{br} = \eta_\Omega^2 + \eta_\Gamma^2 + \rho_\Omega^2 + \rho_\Gamma^2
  \quad\text{with $\eta_\Omega$, $\rho_\Omega$ defined in~\eqref{eq:defBrokenWeakFormEstimators} with $k=1$}.
\end{align}

{\emph{Modified PINN loss.}}
\begin{align}\label{eq:defloss:PINNmod}
  \loss_\mathrm{Pmod} = \eta_\Omega^2 + \eta_\Gamma^2 + \rho_\Omega^2 + \rho_\Gamma^2
  \quad\text{with $\eta_\Omega$, $\rho_\Omega$ defined in~\eqref{eq:defStrongFormEstimators} with $k=0$}.
\end{align}

{\emph{Classical PINN loss.}}
\begin{align}\label{eq:defloss:PINN}
  \loss_\mathrm{PINN} = \frac{|\Omega|}N\sum_{j=1}^N |(f+\Delta u_\theta)(x_j,y_j)|^2 + \frac{\alpha}{M} \sum_{k=1}^M |u_\theta(x^b_k,y^b_k)|^2
\end{align}
where $\{(x_j,y_j)\}_{j=1}^N$ and $\{(x^b_k,y^b_k)\}_{k=1}^M$ denote randomly sampled (uniform distribution) points in $\Omega$ and $\Gamma$, respectively. 
The parameter $\alpha>0$ has to be chosen by the user. We choose $\alpha = M$. This mimics a negative power of the mesh size if the points $(x_k^b,y_k^b)$ were the nodes of a uniform partition of $\Gamma$, see Remark~\ref{rem:PINN}.

\subsection{Comparison between methods for smooth solution}\label{sec:ex:smooth}
Let $\Omega = (0,1)^2$. We consider a partition of $\Omega$ into $64$ triangles with equal area and $h_\mesh|_T = \tfrac14$ for all $T\in\mesh$.
The same initial net with $L=5$ layers and $N=20$ neurons per layer, which results in $1764$ total learnable parameters, is trained using the losses $\loss_\mathrm{wb}$, $\loss_\mathrm{br}$, $\loss_\mathrm{Pmod}$ and $\loss_\mathrm{PINN}$ as defined above, 
where for the first three we use a $6$-point quadrature rule on triangles (exact of degree $4$) and a $4$-point Gaussian quadrature rule on boundary elements.
The number of sample points in $\Omega$ (resp. $\Gamma$) for evaluation of $\loss_\mathrm{PINN}$ is the same as the number of quadrature points in $\Omega$ (resp. $\Gamma$) used for the other loss functionals.

In Figure~\ref{fig:smooth} we visualize the $H^1(\Omega)$ errors over accumulated solving time (left) and iterations (right). 
With respect to both time and iteration, our three proposed losses perform better than the classical PINN strategy. 
Concerning the losses~\eqref{eq:defloss:wb}--\eqref{eq:defloss:PINNmod} we see that $\loss_\mathrm{wb}$ performs best, while the other two are comparable. Note that $\loss_\mathrm{br}$ and $\loss_\mathrm{Pmod}$ measure the error in stronger norms than the $H^1(\Omega)$ norm.
The bottom row (left plot) shows the ratio $\sqrt{\loss_\star}/\|u-u_\theta\|_{H^1(\Omega)}$ when using the different losses. 
For the first three losses we observe that the ratio is around $2$ whereas in the case of using $\loss_\mathrm{PINN}$ a higher variation is seen, indicating less robustness for the classical PINN loss.
The bottom row also shows the different contributions that make up $\loss_\mathrm{wb}$. All contributions seem to decrease at the same speed.

\begin{figure}
  \includegraphics{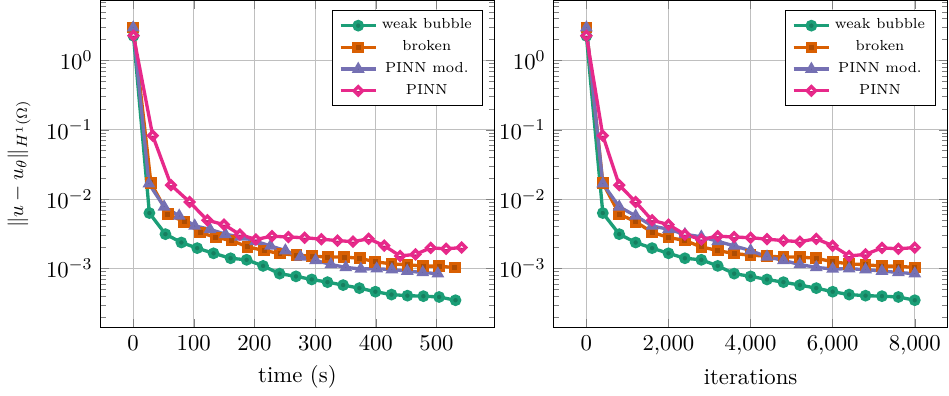}
  \includegraphics{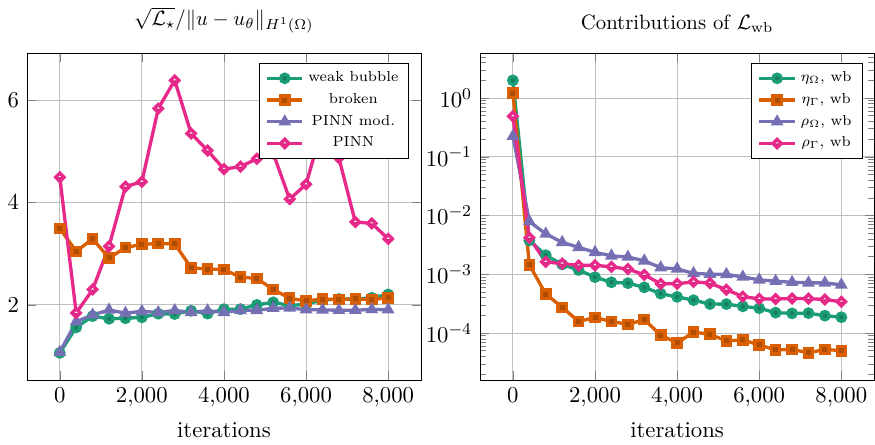}
  \caption{First row shows error in $H^1(\Omega)$ norm over time (left) and number of iterations (right). The second row shows the ratio between the square root of the loss and the error in the energy norm.}\label{fig:smooth}
\end{figure}

\subsection{Using $\eta^2$ or $\eta^2+\rho^2$ as loss}\label{sec:ex:etaVsEtaRho}
For this example, we consider the same setup as in section~\ref{sec:ex:smooth}.
We want to compare the training performance using only the residual contribution $\eta^2$ as the loss versus considering $\eta^2+\rho^2$.
For that we consider $\loss_\mathrm{wb}$ as before and define $\loss_{\mathrm{wb},\eta}=\eta_\Omega^2+\eta_\Gamma^2$ with $\eta_\Omega$, $\eta_\Gamma$ as in~\eqref{eq:defloss:wb}.
It has been observed in~\cite{RVPINN24} that using $\loss_{\mathrm{wb},\eta}$ as  loss functional can lead to a gap between $\loss_{\mathrm{wb},\eta}$ and the exact error $\|u-u_\theta\|_{H^1(\Omega)}$. We observe a similar effect in our experiment, see Figure~\ref{fig:EtaVsEtaRho}. In particular, in the right plot, it can be seen that the ratio $\sqrt{\loss_{\mathrm{wb},\eta}}/\|u-u_\theta\|_{H^1(\Omega)}$ seemingly becomes smaller.
Furthermore, the left plot shows that using $\loss_\mathrm{wb}$ clearly outperforms using only $\loss_{\mathrm{wb},\eta}$ in terms of error over iterations.

\begin{figure}
  \includegraphics{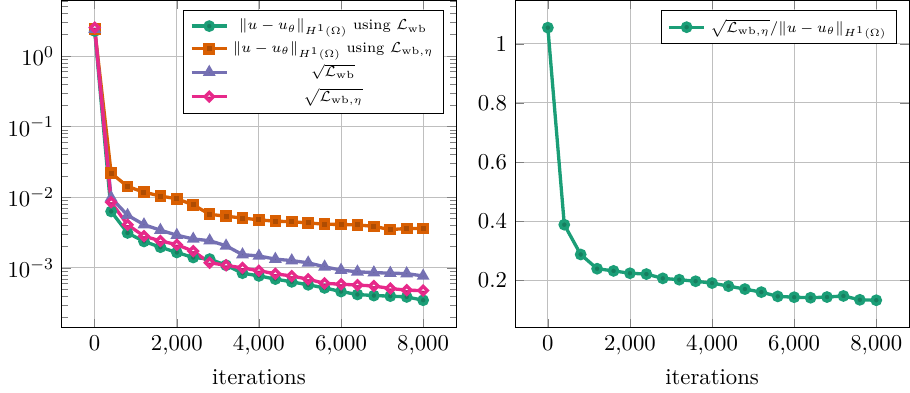}
  \caption{The left plot shows $H^1(\Omega)$ norm over number of iterations (epochs). The right plot shows the ratio between the square root of the loss and the error in the energy norm.}\label{fig:EtaVsEtaRho}
\end{figure}

\subsection{Comparison to enforcing boundary condition}\label{sec:ex:enforceBC}
For this experiment, consider enforcing boundary conditions by multiplying the network output with a fixed function $\varphi$. 
In this example we choose $\varphi(x,y) = x(1-x)y(1-y)$ which is smooth, $\varphi(x,y)>0$ in $\Omega=(0,1)^2$ and vanishes on the boundary. 
We shall refer to this output as $\widetilde u_{\theta}$, whereas $u_\theta$ refers to the output of a neural network without multiplying by $\varphi$.
We use the exact manufactured solution
\begin{align*}
  u(x,y) = \big(1-e^{-x/\varepsilon}\big)(1-x)\sin(\pi y) \quad (x,y)\in \Omega
\end{align*}
for $\varepsilon=10^{-2}$ and compute $f=-\Delta u$.
We stress that this function is a typical layer function for reaction-dominated diffusion problems with a boundary layer near the edge $x=0$ of width $\varepsilon$.

We consider a neural network with $L=5$ layers and $N=30$ neurons per layer, which gives a total of $3841$ learnable parameters. 
As background mesh we use the one depicted in Figure~\ref{fig:enforceBC}.
It is pre-refined in order to resolve the data $f$ (which also has a layer). 
In total it has $3648$ volume elements and, using the same quadrature rule as for the previous experiments, we obtain $21888$ quadrature nodes in the volume and $480$ on the boundary.
We then compare the errors generated by using either $\loss_\mathrm{wb}(u_\theta)$ or 
\begin{align*}
  \loss_\mathrm{wb}(\widetilde u_\theta) = \eta_\Omega^2(\widetilde u_\theta) + \rho_\Omega^2(\widetilde u_\theta)
\end{align*}
as loss functionals.
Note that $\eta_\Gamma(\widetilde u_\theta) = 0 = \rho_\Gamma(\widetilde u_\theta)$ since $\widetilde u_\theta$ vanishes on $\Gamma$.
The results of our numerical simulations are presented in Figure~\ref{fig:enforceBC}. 
One observes that enforcing boundary conditions by multiplication with a smooth function can drastically reduce the speed of learning. 
In particular, at iteration $3000$ the $H^1(\Omega)$ error when using $\loss_\mathrm{wb}(u_\theta)$ is almost a factor $100$ smaller than when using $\loss_\mathrm{wb}(\widetilde u_\theta)$.
\begin{figure}
  \includegraphics{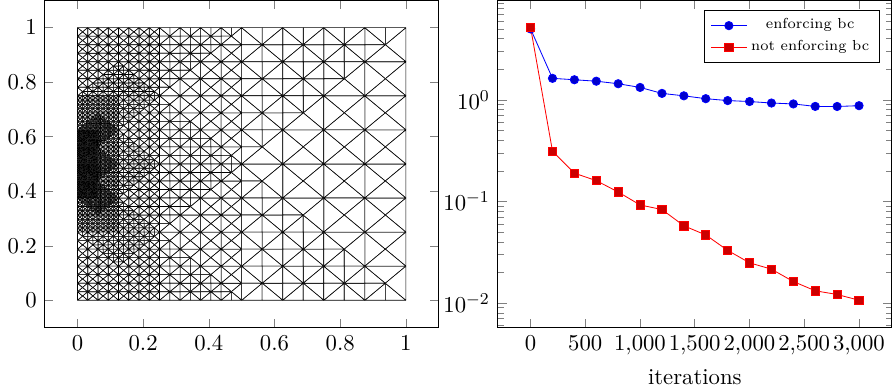}
  \caption{Comparing the inclusion of ``enforcing boundary conditions'' by multiplying network output with $\varphi$ (Section~\ref{sec:ex:enforceBC}).}\label{fig:enforceBC}
\end{figure}

\subsection{Effect of quadrature and adaptive algorithm}\label{sec:ex:quad}

\begin{algorithm}[!h]
\SetKwData{Left}{left}
\SetKwData{This}{this}
\SetKwData{Up}{up}
\SetKwFunction{Union}{Union}
\SetKwFunction{FindCompress}{FindCompress}
\SetKwInOut{Input}{Input}
\SetKwInOut{Output}{Output}
\Input{Neural network $\net$, mesh $\mesh$, loss $\loss$, stopping criterion, adaptivity parameters $\tau_1\in(0,1)$, $\tau_2\in(0,1)$}
\Output{Trained neural network $\net$, refined mesh $\mesh$}
\BlankLine
  \While{stopping criterion not met}
  {
    Update neural network $\net$ by doing one solver step\;
    Compute $\loss(u_\theta)$ and $\widehat\loss(u_\theta)$\;
    \If{$|\loss(u_\theta)-\widehat\loss(u_\theta)|>\tau_1\widehat\loss(u_\theta)$}
    {
      $M = \max\{|\loss(u_\theta)[T]-\widehat\loss(u_\theta)[T]|/\widehat\loss(u_\theta)[T]\,:\,T\in\mesh\}$\;
      $\mathcal{M} = \set{T\in\mesh}{|\loss(u_\theta)[T]-\widehat\loss(u_\theta)[T]|>\tau_2 \widehat\loss(u_\theta)[T] M}$\;
      Update mesh $\mesh$ by refining (at least) all marked elements $\mathcal{M}$\;
    }
  }
\caption{Training algorithm with mesh-refinement}\label{alg}
\end{algorithm}

For this experiment we study quadrature and how to locally refine the mesh in order to increase the number of quadrature points in a specific region.
On an element $T\in\mesh$ we use, as in the previous experiments, a $6$ point quadrature rule and, on $F\in\faces$, we use a $4$ point Gaussian quadrature. 
The totality of all quadrature nodes on elements and boundary elements is denoted by $Q_{\mesh,\faces}$ 
and $\widehat Q_{\mesh,\faces}$ denotes a higher order quadrature rule. Specifically, we choose a $16$ point quadrature for volume elements (exact for degree $8$) and an $8$ point Gaussian quadrature rule for boundary elements.
We consider the loss $\loss_\mathrm{wb}$ and stress that it can be written in a localized way as
\begin{align*}
  \loss_\mathrm{wb}(u_\theta) &= \eta_\Omega^2 + \rho_\Omega^2 + \eta_\Gamma^2 + \rho_\Gamma^2
  = \sum_{T\in\mesh} \loss_\mathrm{wb}(u_\theta)[T].
\end{align*}
For estimators $\rho_\Omega^2$ and $\rho_\Gamma^2$ this decomposition is clear since they are defined using $L^2$ norms. 
(We associate to each boundary element $F\in\faces$ its unique element $T_F\in\mesh$.)
For the estimators $\eta_\Omega^2$, $\eta_\Gamma^2$ we note that there exists unique discrete functions $r_\Omega\in V_{\Omega,\mesh}\subset H_0^1(\Omega)$, $r_\Gamma\in V_{\Gamma,\mesh} \subset\Hdivset\Omega$ with
\begin{align*}
  \eta_\Omega^2 = \|\nabla r_\Omega\|_{\Omega}^2, \quad \eta_\Gamma^2 = \|r_\Gamma\|_{\Hdivset\Omega}^2
\end{align*}
which can then be decomposed into local contributions.

To compute $\loss_\mathrm{wb}$ we use quadrature $Q_{\mesh,\faces}$ and define $\widehat\loss_\mathrm{wb}$ analogously but using the higher order quadrature $\widehat Q_{\mesh,\faces}$.

For the experiment we consider the same setup as in Section~\ref{sec:ex:smooth} but we choose as initial mesh a triangulation $\mesh$ of $\Omega$ into $4$ equal-area triangles.
Using $\loss_\mathrm{wb}$ we have $24$ quadrature points in $\Omega$ and $16$ on the boundary. 
In Figure~\ref{fig:quad} we visualize the error as well as the loss (left plot) and the ratio thereof (right plot), all labeled as $\emph{no adap}$.
One observes that training could be stopped after a few iterations, after which the error does not improve anymore.
Interestingly, the loss does not decrease either so that the ratio becomes constant. However, the loss is smaller by a significant factor, indicating that quadrature is possibly not sufficiently accurate, and, therefore, robustness is lost.
To overcome this, one could use ``overkill'' quadrature rules.
However, we want to avoid this and propose a simple adaptive algorithm sketched in Algorithm~\ref{alg}.
The basic idea is the following: In each iteration, we compute $\loss_\mathrm{wb}$ and $\widehat\loss_\mathrm{wb}$. If they are relatively far apart (measured by the parameter $\tau_1$), then elements are marked for mesh-refinement.
Here, we use a maximum strategy where we mark all elements where the relative local error between $\loss_\mathrm{wb}$ and $\widehat\loss_\mathrm{wb}$ is larger than a certain percentage (parameter $\tau_2$) of the maximum of the local errors.
We use the newest vertex bisection algorithm to iteratively bisect each marked element into $4$ refined elements. 
This then quadruples the number of quadrature points in the same region.

Employing the adaptive algorithm with parameters $\tau_1= 0.3$, $\tau_2 = 0.7$ we find that the ratio between loss and error is around $1.4$, see results presented in Figure~\ref{fig:quad} labeled \emph{adap}.
Furthermore, one observes that the error does not become stationary during the same number of iterations and keeps getting smaller. 
We note that at iteration $4000$ the mesh generated by Algorithm~\ref{alg} has $387$ elements, resulting in $2322$ quadrature nodes in $\Omega$ and $148$ quadrature nodes on $\Gamma$.

\begin{figure}
  \includegraphics{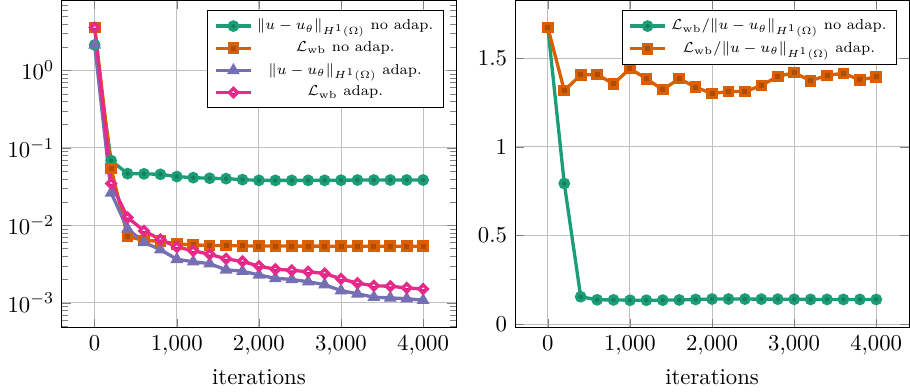}
  \caption{Results for setup of Section~\ref{sec:ex:quad} with (adap.) and without (no adap.) using Algorithm~\ref{alg}.}\label{fig:quad}
\end{figure}

Another effect of refining the mesh is that $\rho_\Omega$, $\rho_\Gamma$ become smaller (they are multiplied by the mesh size) and $\eta_\Omega$, $\eta_\Gamma$ become larger since the discrete spaces defined over the mesh grow in dimension. 
Therefore, the bound $\eta(u_\theta)\lesssim \|u-u_\theta\|_{H^1(\Omega)}$ becomes tighter.
For the upper bound $\eta(u_\theta)+\rho(u_\theta)$, eventually, $\rho(u_\theta)$ should become less dominant.

\subsection{Singular solution}\label{sec:ex:Lshape}
For the last experiment consider the domain $\widetilde\Omega = (-1,1)^2\setminus[-1,0]^2$ and define $\Omega$ by rotating $\widetilde\Omega$ $45$ degrees clockwise. Define the exact solution $u = r^{2/3}\cos(2/3\varphi)$, where $(r,\varphi)$ denote polar coordinates centered at the origin and such that $u$ vanishes on the two incoming edges to the re-entrant corner.
We compute $f=-\Delta u = 0$, and $g= u|_\Gamma$.

We consider a neural network with $L=8$ layers and $N=20$ neurons per layer which has in total $3021$ learnable parameters.
As loss functional we take $\loss_\mathrm{wb}$ and employ Algorithm~\ref{alg} with a uniform initial mesh of $192$ elements and $\tau_1=0.2$, $\tau_2=0.75$. The results are shown in Figure~\ref{fig:Lshape} labeled \emph{adap}.
The final mesh at $12000$ iterations consists of $567$ elements with $3402$ quadrature nodes in $\Omega$ and $172$ quadrature nodes on $\Gamma$.
It is visualized in Figure~\ref{fig:Lshape} (bottom row on the right). The bottom row (left plot) also visualizes the contributions that make up $\loss_\mathrm{wb}$. One sees that all contributions except $\eta_\Gamma$, which is slightly smaller, are of the same order.
We compare these results (also in Figure~\ref{fig:Lshape}) to using a fixed mesh with $768$ equal-area triangles, resulting in $4608$ quadrature nodes in $\Omega$ and $256$ quadrature nodes on $\Gamma$ (labeled \emph{no adap.}).
One observes the loss of robustness without using Algorithm~\ref{alg} although the total number of quadrature nodes is larger. 
In particular, note that we use a logarithmic scaling of the vertical axis in the right plot.
The ratio between loss and exact error hovers around $1.3$ when using the adaptive algorithm, but without employing the algorithm, it deteriorates to about $0.01$.
This indicates that it is essential to locally increase the number of quadrature points for problems with local features such as singular gradients.

\begin{figure}
  \includegraphics{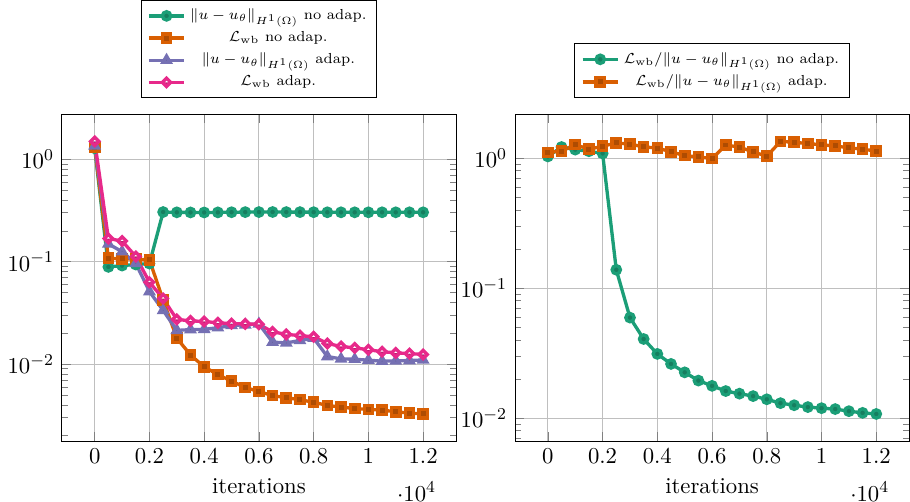}
  \includegraphics{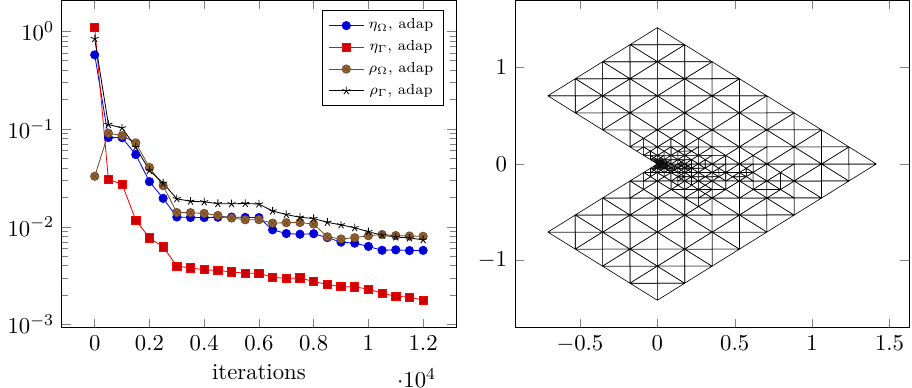}
  \caption{Results for problem on L-shaped domain with singular solution (Section~\ref{sec:ex:Lshape}).}\label{fig:Lshape}
\end{figure}

\section{Conclusions \& future work}\label{sec:conclusions}
We derived a general error estimate of the form 
\begin{align*}
  \|u-u_\theta\|_U \eqsim \eta(u_\theta) + \mu(u_\theta)
\end{align*}
where $\eta(u_\theta)$ is a computable discrete residual.
For several different formulations of a second-order elliptic model problem, we have estimated $\mu(u_\theta)$ by a computable upper bound $\rho(u_\theta)$ and proposed to use $\eta^2+\rho^2$ as (part of the) loss functional. 
In particular, we discussed the inclusion of Dirichlet boundary conditions and their relation to PINNs. 
Our numerical experiments suggest that the proposed use of $\eta^2+\rho^2$ as loss functional in neural network training delivers not only a more robust loss but also generates more accurate approximations. 
Furthermore, the importance of including boundary conditions in the loss functional rather than enforcing them is evident for problems with boundary layers.

In this work, we assumed that the neural network output is sufficiently smooth so that the residual estimators are well defined. This excludes the use of the popular ReLU activation function. In future work we want to explore other variational formulations that allow less regular activation functions. 
Furthermore, we plan to study and refine the proposed adaptive algorithm and investigate singularly perturbed problems such as reaction-dominated and convection-dominated diffusion problems.

\bibliographystyle{alpha}
\bibliography{literature}

\end{document}